 \def\newblock{\ }%
\newtheorem{definition}{Definition}
\newtheorem{theorem}{Theorem}
\newtheorem{lemma}[theorem]{Lemma}
\newtheorem{corollary}[theorem]{Corollary}
\newtheorem{proposition}[theorem]{Proposition}
\newtheorem{example}{Example}
\newcommand{\ba}{\mbox{\boldmath $a$}}
\newcommand{\bb}{\mbox{\boldmath $b$}}
\newcommand{\bc}{\mbox{\boldmath $c$}}
\newcommand{\bd}{\mbox{\boldmath $d$}}
\newcommand{\be}{\mbox{\boldmath $e$}}
\newcommand{\bg}{\mbox{\boldmath $g$}}
\newcommand{\bx}{\mbox{\boldmath $x$}}
\newcommand{\by}{\mbox{\boldmath $y$}}
\newcommand{\bA}{\mbox{\boldmath $A$}}
\newcommand{\bB}{\mbox{\boldmath $B$}}
\newcommand{\bC}{\mbox{\boldmath $C$}}
\newcommand{\bI}{\mbox{\boldmath $I$}}
\newcommand{\bT}{\mbox{\boldmath $T$}}
\newcommand{\bU}{\mbox{\boldmath $U$}}
\newcommand{\bV}{\mbox{\boldmath $V$}}
\newcommand{\bX}{\mbox{\boldmath $X$}}
\newcommand{\bY}{\mbox{\boldmath $Y$}}
\newcommand{\bSigma}{\mbox{\boldmath $\Sigma$}}
\newcommand{\real}[1]{\mbox{$\mathbb{R}^{#1}$}}
\newcommand{\compl}[1]{\mbox{$\mathbb{C}^{#1}$}}
\newcommand{\complN}[1]{\mbox{$\compl{I_1 \times I_2 \times \dots \times I_{#1}}$}}
\newcommand{\complR}[1]{\mbox{$\compl{R_1 \times R_2 \times \dots \times R_{#1}}$}}
\newcommand{\rank}{\textnormal{rank}}
\newcommand{\Rmnum}[1]{\uppercase\expandafter{\romannumeral #1}} 
\titleformat{\chapter}{\centering\Huge\bfseries}{Chapter \Rmnum{\thechapter} }{1em}{} 
\newcommand{\df}{\em } 
\newcommand{\tensorE}{\mbox{\boldmath $\mathcal{E}$}}
\newcommand{\tensorG}{\mbox{\boldmath $\mathcal{G}$}}
\newcommand{\tensorI}{\mbox{\boldmath $\mathcal{I}$}}
\newcommand{\tensorT}{\mbox{\boldmath $\mathcal{T}$}}
\newcommand{\tensorX}{\mbox{\boldmath $\mathcal{X}$}}
\newcommand{\tensorY}{\mbox{\boldmath $\mathcal{Y}$}}
\begin{document}
\title{ Tensor and Its Tucker Core: the Invariance Relationships }

\author{
Bo JIANG
\thanks{Research Center for Management Science and Data Analytics, School of Information Management and Engineering, Shanghai University of Finance and Economics, Shanghai 200433, China. Email: isyebojiang@gmail.com. The research of this author was supported in part by National Natural Science Foundation of China (Grant 11401364) and Program for Innovative Research Team of Shanghai University of Finance and Economics.}
\and
Fan YANG
\thanks{Institute for Computational \& Mathematical Engineering, Stanford University, Stanford, CA 94305, USA. Email: fanfyang@stanford.edu.}
\and
Shuzhong ZHANG
\thanks{Department of Industrial and Systems Engineering, University of Minnesota, Minneapolis, MN 55455, USA. Email: zhangs@umn.edu. The research of this author was supported in part by National Science Foundation (Grant CMMI-1462408).}
}

\date{\today}
\maketitle

\begin{abstract}

In \cite{HillarLim2013}, Hillar and Lim famously demonstrated that ``multilinear (tensor) analogues of many efficiently computable problems in numerical linear
algebra are NP-hard''. Despite many recent advancements, the state-of-the-art methods for computing such `tensor analogues' still suffer severely from the curse of dimensionality. In this paper we show that the Tucker core of a tensor however, retains many properties of the original tensor, including the CP rank, the border rank, the tensor Schatten quasi norms, and the Z-eigenvalues.
{ When the core tensor is smaller than the original tensor}, this property leads to considerable computational advantages as confirmed by our numerical experiments.
In our analysis, we in fact work with a generalized Tucker-like decomposition that can accommodate any full column-rank factor matrices.

\vspace{0.5cm}

\noindent {\bf Keywords:} Tucker decomposition, CP decomposition, border rank, tensor Schatten quasi norm, tensor eigenvalues.

\noindent {\bf AMS subject classifications:} 15A69, 15A18, 15A03



\end{abstract}



\newpage

\section{Introduction}
A tensor is a multidimensional extension of matrices, which has recently attracted 
a surge of research attention due to its
wide applications in computer vision \cite{CRoMDUTRD}, psychometrics \cite{Harshman,CARROLL}, diffusion magnetic resonance imaging \cite{Ghosh2008,Bloy2008,HOPSDTI}, quantum entanglement problem \cite{TGMoMEatSVoaH}
and {tensor-structured numerical methods for multi-dimensional PDEs \cite{Khoromskij2006, Khoromskaia2010}}.
We refer the interested reader to {the surveys \cite{TDaA, Khoromskij2015} on these subjects}.

To study the spectral theory of tensors, various notions of tensor decompositions, eigenvalues and norms have been proposed. Unfortunately, unlike many of their matrix counter-parties,
most tensor problems are computationally intractable~\cite{HillarLim2013}. Therefore, the numerical algorithms that aim to globally solve those problems are often time-consuming.
For instance, the approach proposed in~\cite{AREoST} to compute all Z-eigenvalues of a tensor is based on the so-called SOS (sum of squares) approach, which leads to a series of Semidefinite Programs with fast-growing sizes. Thus, it is naturally desirable that the same computational task would be performed on a tensor with smaller size. 
In this paper
we establish 
that many of the aforementioned properties of a tensor carryover to 
its Tucker core, which is typically much smaller.   
To start off, let us introduce an extended notion of Tucker decomposition.
\begin{definition}   \label{Def:TensorDecomp}
Consider an $N$-way tensor $\tensorX \in \complN N$.
The equation
\begin{equation}   \label{Eq:DefTenforDecomp}
\tensorX = \tensorG \times_1 \bA^{(1)} \times_2 \cdots \times_N \bA^{(N)} \triangleq \llbracket \tensorG; \bA^{(1)} , \ldots, \bA^{(N)} \rrbracket,
\end{equation}
is called a
{\df size-$(J_1, \dots, J_N)$ Tucker decomposition} of $\tensorX$, and $\tensorG \in \compl{J_1 \times J_2 \times \cdots \times J_N}$ is called a {\df core tensor} associated with this decomposition, and $\bA^{(n)} \in \compl{I_n \times J_n}$ is the $n$-th {\df factor matrix} for $n=1, \dots, N$,
where $\times_n$ is the {\df mode-$n$ (matrix) product}.
Moreover, a Tucker decomposition is said to be {\df independent} if each of the factor matrix has full column rank;
a Tucker decomposition is said to be {\df orthonormal} if each of the factor matrix has orthonormal columns.
\end{definition}

Note that the conventional Tucker decomposition corresponds to the orthonormal Tucker decomposition, which is also known as the higher-order SVD (HOSVD) in the literature.
De Lathauwer, De Moor and Vandewalle~\cite{DeLathauwerMoorVandewalle2000} proposed an algorithm to compute such a decomposition. In a sequel, the same authors soon later proposed the higher-order orthogonal iteration (HOOI) in~\cite{DeLathauwerMoorVandewalle2000b} to accommodate for inexact Tucker decomposition.
In this paper, our analysis will be performed on the above-defined general independent Tucker decomposition unless specified otherwise.

The CP decomposition of a tensor is another important notion of tensor-decomposition, which leads to the definition of CP rank. The fact that the CP decomposition of core tensor is useful to decompose the original tensor itself
has already been observed (see \cite{BroAndersson1998} and Section 5.3 of \cite{TDaA}).
Consequently, the CP rank of a tensor equals that of its Tucker core follows from this observation.
Moreover, De Lathauwer et al.\ \cite{DeLathauwerMoorVandewalle2000} showed that the Frobenius norm remains invariant for a given tensor and its core. However, those results are scattered in the literature, and often they are implicit. 
In this paper, we aim to establish 
the equivalence between a tensor and its Tucker core in a systematic fashion, including the
CP rank and the Frobenius norm, 
and also other forms of
tensor ranks, Z-eigenvalues and tensor Schatten quasi norms. In addition to tensor decompositions, the study of tensor eigenvalues became popular after the seminal papers of
Qi~\cite{Qi2005} and Lim~\cite{SVaEoTAVA}. Furthermore, recently the tensor nuclear norm was used by Yuan and Zhang~\cite{YuanZhang2015} in
tensor completion to capture the low-rank structure;
the regression bound obtained in \cite{YuanZhang2015} is better than that induced by the 
mode-$n$ matricization. 
Those properties allow us to propose the following scheme to compute the rank, the norms and the eigenvalues of a tensor, as long as these properties are invariant between the tensor and its Tucker core.
As a first step, one computes Tucker decomposition of a given tensor.
Then, the computations are performed on the smaller Tucker core. Finally, the computed quantity is transformed back to the original tensor.
As we shall see later,
the size of the Tucker core may be no more than $2$ for some structured tensors, regardless the size of the original tensor.
The savings on the computational time
gained by following this scheme could be significant when the size of the core is considerably smaller compared to the original tensor,
which is the case for many specific instances encountered in our numerical experiments.

The remainder of this paper is organized as follows. In the next section, we introduce 
the tensor notations and operations that we shall use in this paper.  
Then we discuss the invariance of 
the tensor ranks, the norms and the eigenvalues in Sections~\ref{sec:rank}, \ref{sec:norm} and \ref{sec:eigenvalue}
respectively. Section \ref{sec:error} discusses the implications of the invariance properties, the advantages of size reduction, and 
an error estimation of the new computational scheme.
Finally, we
apply our scheme to compute all the Z-eigenvalues of symmetric 
tensors. Our numerical results show that our strategy leads to a significant reduction in computational time on a set of testing instances.

\section{Notations and preliminaries}

Throughout this paper, we use the boldface lowercase letters, the capital letters, and the calligraphic
letters to denote vectors, matrices, and tensors, respectively. For example, a vector (always a column vector unless otherwise stated) $\bx$, a matrix $\bA$,
and a tensor $\tensorX$. We use $\|\cdot \|$ to denote the Euclidean norm of the vectors. For a matrix $\bA$, $\sigma_{\max}(\bA)$ and $\sigma_{\min}(\bA)$ denote
the largest and smallest singular value of $\bA$,
while $\| \bA\|_2$ denotes its spectral norm:
$$
\| \bA\|_2 = \max_{\|\bx\|=1} \|\bA \bx\| = \sigma_{\max}(\bA).
$$
We use lowercase subscripts
to denote its components; e.g.\ $x_i$ is the $i$-th entry of
vector $\bx$, $a_{ij}$ is the $(i,j)$-th entry of matrix $\bA$, and $x_{i_1\cdots i_n}$ is the $(i_1,\cdots, i_n)$-th entry of $n$-th
order tensor $\tensorX$. Moreover, we use the superscripts with bracket to refer a sequence of variables; e.g., a sequence of $N$ matrices is denoted by $\bA^{(1)},\bA^{(2)}, \ldots, \bA^{(N)}$.

The {\df mode-$n$ (matrix) product} denoted by ``$\times_n $'' of a tensor $\tensorX \in \complN N$ and a matrix $\bA \in \compl{J \times I_n}$ results in a tensor of size $I_1 \times \cdots \times I_{n-1} \times J \times I_{n+1} \times \cdots \times I_{N}$
such that
\begin{equation*}
\left[ \tensorX \times_n \bA \right]_{i_1\cdots i_{n-1} j i_{n+1} \cdots i_N}=\sum_{i_n=1}^{I_n}{x_{i_1\cdots i_{n-1} i_n  i_{n+1}\cdots i_N} \cdot a_{ji_n}}.
\end{equation*}
In the meanwhile, for a given tensor $\tensorX \in \complN N$, the {\df mode-$n$ matricization} denoted by $\bX_{(n)}$ is a mapping from tensor to matrix. In particular, the $(i_1, i_2, \dots, i_N)$-th entry of $\tensorX$ corresponds to the $(i_n, j)$-th entry of $\bX_{(n)}$, where
$$
j=1+\sum_{k=1, k\neq n}^N{(i_k-1)J_k}, \ J_k=\prod_{m=1, m\neq n}^{k-1}{I_m}.
$$
Some properties relating the mode-$n$ product and mode-$n$ matricization are summarized in the following proposition, which will be used later. Interested reader is referred to
\cite{BaderKolda2006} for more information on 
tensor multiplications.
\begin{proposition}\label{Prp:NModeProd}
For any $N$-way tensor $\tensorX \in \complN N$ and matrix $\bU \in \compl{J\times I_n}$, their mode-$n$ product satisfies
$$
(\tensorX \times_n \bU)_{(n)}=\bU \bX_{(n)}.
$$
Moreover, for any given matrices $\bA \in \compl{J_m \times I_m}$, $\bB \in \compl{J_l \times I_l}$, if $m \neq l$ then we have
$$
(\tensorX \times_m \bA) \times_l \bB=(\tensorX \times_l \bB) \times_m \bA;
$$
if $m=l$ and suppose the matrix multiplication is compatible, then we have
$$
(\tensorX \times_m \bA) \times_m \bB=\tensorX \times_m (\bB  \bA).
$$
\end{proposition}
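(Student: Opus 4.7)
The plan is to verify each of the three identities directly from the definitions of the mode-$n$ product and the mode-$n$ matricization stated just above the proposition. Each is an exercise in unwinding the indexing conventions, so the proof will be entirely entry-wise bookkeeping.

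First, for $(\tensorX \times_n \bU)_{(n)} = \bU \bX_{(n)}$, I would compute a generic entry of each side. By the definition of the mode-$n$ product, the $(i_1,\ldots,i_{n-1},j,i_{n+1},\ldots,i_N)$-th entry of $\tensorX \times_n \bU$ equals $\sum_{i_n=1}^{I_n} u_{j,i_n}\, x_{i_1\cdots i_N}$. Matricizing along mode-$n$ places $j$ as the row index and encodes $(i_1,\ldots,i_{n-1},i_{n+1},\ldots,i_N)$ into a single column index $\ell$ according to the prescribed formula. For the right-hand side, $[\bU \bX_{(n)}]_{j,\ell} = \sum_{i_n} u_{j,i_n}\, [\bX_{(n)}]_{i_n,\ell}$, and the very same column-indexing rule applies to $\bX_{(n)}$, so the two sides agree entry-by-entry.

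For the second identity with $m\neq l$, I would expand a generic entry of $(\tensorX \times_m \bA)\times_l \bB$ as a double summation over the two contracted indices $i_m$ and $i_l$. Since the entries $a_{*,i_m}$, $b_{*,i_l}$ and $x_{\ldots i_m\ldots i_l\ldots}$ involve these two dummies independently, Fubini allows the two sums to be swapped; reinterpreted through the definition, the resulting expression is precisely the corresponding entry of $(\tensorX \times_l \bB)\times_m \bA$. For the third identity, writing $(\tensorX \times_m \bA)\times_m \bB$ at the entry indexed by $k$ in mode $m$ gives $\sum_j b_{k,j} \sum_i a_{j,i}\, x_{\ldots i \ldots} = \sum_i \bigl(\sum_j b_{k,j}\, a_{j,i}\bigr) x_{\ldots i \ldots} = \sum_i (\bB\bA)_{k,i}\, x_{\ldots i \ldots}$, which matches the definition of $\tensorX \times_m (\bB\bA)$.

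None of the three steps is genuinely hard; the whole argument is index-chasing. The one place to be careful is the consistency of the column-indexing formula in the mode-$n$ matricization between $(\tensorX\times_n \bU)_{(n)}$ and $\bX_{(n)}$: since $\times_n \bU$ changes only the size of mode $n$ and leaves all other modes intact, the combinatorial map used to encode the non-$n$ indices into $\ell$ is literally the same for both matricizations, which is what makes the first identity a clean equality rather than requiring a change-of-index argument.
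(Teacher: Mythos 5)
Your proof is correct. The paper itself gives no proof of this proposition --- it simply states the properties and refers the reader to the cited work of Bader and Kolda --- so your entry-wise verification supplies an argument the paper omits rather than paralleling one it contains. Your index-chasing is the standard and essentially only natural route: the first identity follows because the column-encoding $\ell = 1+\sum_{k\neq n}(i_k-1)J_k$ involves only the non-$n$ dimensions and is therefore identical for $\tensorX$ and $\tensorX\times_n\bU$ (the one subtlety, which you correctly flag), while the second and third identities reduce to interchanging two independent summations and to associativity of the contraction $\sum_i\bigl(\sum_j b_{kj}a_{ji}\bigr)x_{\ldots i\ldots}=\sum_i(\bB\bA)_{ki}x_{\ldots i\ldots}$, respectively. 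No gaps.
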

The {\df outer product} denoted by ``$\circ$'' of two tensors $\tensorX \in \complN {N_1}$ and $\tensorY \in \compl{I_{N_1+1} \times \dots \times I_{N_1+N_2}}$ is a tensor of size
$I_{1} \times \cdots \times I_{N_1 + N_2}$ such that
\begin{equation*}
\left[ \tensorX \circ \tensorY \right]_{i_1 \cdots i_{N_1+N_2}}=x_{i_1 \cdots i_{N_1}} \cdot y_{i_{N_1+1} \cdots i_{N_1+N_2}}.
\end{equation*}
In particular, $\tensorX$ is a rank-1 tensor if it can be written as an outer products of vectors; e.g.
\begin{equation*}\label{Eq:DefTensorProduct}
\tensorX=\ba^{(1)} \circ \ba^{(2)} \circ \dots \circ \ba^{(N)}.
\end{equation*}
The {\df inner product} of two tensors $\tensorX \in \complN {N}$ and $\tensorY \in \complN {N}$ is denoted by
\begin{equation*}
\langle \tensorX, \tensorY \rangle=\sum_{i_1=1}^{I_1}\dots\sum_{i_N=1}^{I_N}x_{i_1\cdots i_N} \cdot y_{i_1\cdots i_N}.
\end{equation*}
For any tensor $\tensorX\in \complN N$, the {\df Frobenius norm} of tensor $\tensorX$ is defined as
$$
\|\tensorX \|_F \triangleq \sqrt{\langle \tensorX, \tensorX \rangle}.
$$

More discussions on tensor operations can be found in~\cite{MOfHOD}.
The notion of tensor decomposition is central to the study of tensors.
Let us now formally introduce the so-called CP decomposition and the CP rank, where `CP' is a further abbreviation from CANDECOMP (canonical decomposition) by Carroll and
Chang~\cite{CARROLL} and PARAFAC (parallel factorization) by Harshman \cite{Harshman} in early 1970's. 

\begin{definition}   \label{Def:CP}
For an $N$-way tensor $\tensorX \in \complN N$, a
{\df CP decomposition}  {is a representation of} $\tensorX$ as a sum of $r$ rank-1 tensors,
\[
\tensorX = \sum_{t=1} ^r \ba^{(1,t)} \circ \cdots \circ \ba^{(N,t)},
\]
where $\ba^{(n,t)}\in \compl{I_n}$. The {\df CP rank} of $\tensorX$, denoted by $\rank_{CP}(\tensorX)$, is the minimum integer $r$ such that a size-$r$ CP decomposition is possible. 
\end{definition}
Another important notion of tensor decomposition is the so-called Tucker decomposition proposed by Tucker \cite{Tucker1966}, 
as introduced in Definition~\ref{Def:TensorDecomp} (though in a slightly more general format). Likewise, this decomposition also leads to another notion of tensor rank.
\begin{definition}   \label{Def:TuckerRank}
For an $N$-way tensor $\tensorX \in \complN N$, its {\df Tucker Rank}, denoted by $\rank_{T}(\tensorX)$, is an $N$-dimensional vector
\begin{equation}  
\rank_T (\tensorX)=(\rank(\bX_{(1)}), \dots, \rank(\bX_{(N)})),
\end{equation}
where $\bX_{(n)}$ is the mode-$n$ matricization of $\tensorX$ for $n=1, \dots, N$, and $rank(\cdot)$ denotes the regular matrix rank.
\end{definition}

In fact, many of the operations we have discussed about can be represented by Tucker decomposition.
For example, a rank-one tensor
$$
\tensorX=\ba^{(1)} \circ \ba^{(2)} \circ \dots \circ \ba^{(N)}=\llbracket 1;\ba^{(1)}, \ba^{(2)}, \dots , \ba^{(N)} \rrbracket
$$
where $1$ is the scalar one; mode-$n$ matrix product
$$
\tensorX \times_n \bA = \llbracket \tensorX;\bI_{I_1}, \dots ,\bI_{I_{n-1}}, \bA ,\bI_{I_{n+1}},\cdots, \bI_{I_{N}} \rrbracket,
$$
where $\bI_k$ is the unit matrix of dimension $k\times k$; the inner product of a tensor $\tensorG$ and a symmetric rank-one tensor (which defines a polynomial):
$$
\langle \tensorG, \underbrace{\bx \circ \cdots \circ \bx}_{N} \rangle =\llbracket \tensorG; \underbrace{\bx^\mathrm{T}, \ldots, \bx^\mathrm{T}}_{N}\rrbracket.
$$
The CP decomposition can also be viewed as a special Tucker decomposition with
$$
\tensorX = \sum_{t=1} ^r \ba^{(1,t)} \circ \cdots \circ \ba^{(N,t)} = \llbracket \tensorI; \bA^{(1)}, \ldots, \bA^{(N)}\rrbracket,
$$
where $\bA^{(n)}=[\ba^{(n,1)}, \ldots, \ba^{(n,r)}]$ for each $n=1, \ldots, N$, and $\tensorI$ is an $N$-way unit tensor, with all zero elements except
the diagonal elements are ones.

\section{The invariance of the tensor ranks}\label{sec:rank} 

There are certain correspondence between the CP decomposition of a given tensor and that of its {\it orthonormal} Tucker core \cite{Khoromskaia2010,BroAndersson1998,Khoromskij2006,TDaA}.
In particular, performing Tucker decomposition and further CP decomposition on the Tucker core is called two-level rank decomposition in \cite{Khoromskij2006}.
Approximating the original tensor by the two-level decomposition is discussed in \cite{KhoromskijKhoromskaia2007,Khoromskaia2010}.
Possible computational savings gained by exploiting this relation were discussed in  \cite{TomasiBro2006}. Moreover, 
as a direct consequence of this correspondence, the CP rank of a tensor equals to that of its orthonormal Tucker core, which is also known as the CANDELINC Theorem; see~\cite{CARROLL80}.\footnote{We would like to thank Nikos Sidiropoulos for his insightful comments and information on the topic in private communications.}

In this section we aim to show that in fact several notions on the rank of a tensor carryover to that of its {\it independent} Tucker core.
Besides, we provide a unified treatment on the independent Tucker decomposition in such a way that
the technical results presented in this section will facilitate our analysis in later discussions.

\subsection{Invariance of the CP rank under independent Tucker decomposition}
Let us start with the CP rank.
\begin{theorem}   \label{Thm:CPRankInvariance}
For any given tensor $\tensorX\in\complN N$ with independent Tucker decomposition
$
\tensorX =\llbracket \tensorG; \bX^{(1)}, \ldots, \bX^{(N)} \rrbracket,
$
we have $\rank_{CP}(\tensorX)=\rank_{CP}(\tensorG)$.
\end{theorem}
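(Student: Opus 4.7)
My plan is to establish the equality by proving two inequalities, $\rank_{CP}(\tensorX) \le \rank_{CP}(\tensorG)$ and $\rank_{CP}(\tensorG) \le \rank_{CP}(\tensorX)$. Both directions rest on a single elementary identity: if $\tensorY = \ba^{(1)} \circ \cdots \circ \ba^{(N)}$ is a rank-one tensor and $\bB$ is a matrix of compatible size, then $\tensorY \times_n \bB = \ba^{(1)} \circ \cdots \circ (\bB \ba^{(n)}) \circ \cdots \circ \ba^{(N)}$, which is still rank one (or zero). This identity follows directly from the definition of the mode-$n$ product, together with the commutativity of mode products across distinct modes (Proposition~\ref{Prp:NModeProd}).

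For the direction $\rank_{CP}(\tensorX) \le \rank_{CP}(\tensorG)$, I would take any CP decomposition $\tensorG = \sum_{t=1}^r \bg^{(1,t)} \circ \cdots \circ \bg^{(N,t)}$ of size $r = \rank_{CP}(\tensorG)$, substitute it into $\tensorX = \llbracket \tensorG; \bX^{(1)}, \ldots, \bX^{(N)} \rrbracket$, and apply the above identity to each summand termwise. This yields
\[
\tensorX = \sum_{t=1}^r (\bX^{(1)} \bg^{(1,t)}) \circ (\bX^{(2)} \bg^{(2,t)}) \circ \cdots \circ (\bX^{(N)} \bg^{(N,t)}),
\]
which is a CP decomposition of $\tensorX$ of size $r$. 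Note that this half does not require the factor matrices to have full column rank.

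For the reverse direction, the full column rank assumption becomes essential. Since each $\bX^{(n)} \in \compl{I_n \times J_n}$ has full column rank $J_n$, it admits a left inverse $\bX^{(n)\dagger}$ with $\bX^{(n)\dagger} \bX^{(n)} = \bI_{J_n}$ (for instance the Moore--Penrose pseudoinverse). Using Proposition~\ref{Prp:NModeProd} to combine successive mode-$n$ products on the same mode, I obtain
\[
\tensorG = \tensorX \times_1 \bX^{(1)\dagger} \times_2 \cdots \times_N \bX^{(N)\dagger}.
\]
Then taking any CP decomposition $\tensorX = \sum_{t=1}^r \ba^{(1,t)} \circ \cdots \circ \ba^{(N,t)}$ of size $r = \rank_{CP}(\tensorX)$ and pushing the $\bX^{(n)\dagger}$ through each rank-one term via the same identity produces a size-$r$ CP decomposition of $\tensorG$, so $\rank_{CP}(\tensorG) \le \rank_{CP}(\tensorX)$.

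I do not anticipate a serious obstacle: the proof is essentially a symmetric application of one identity, with the left-invertibility of the full-column-rank factor matrices doing the entire work for the nontrivial direction. The only care needed is to check that the compositional rule $(\tensorX \times_n \bA) \times_n \bB = \tensorX \times_n (\bB \bA)$ from Proposition~\ref{Prp:NModeProd} is applied correctly so that the left inverses really cancel the factor matrices. This is what makes the independent (rather than merely orthonormal) setting go through unchanged, and it is what I expect the authors to highlight as the generalization of the classical CANDELINC theorem.
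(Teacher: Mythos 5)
Your proposal is correct and follows essentially the same route as the paper: the authors likewise prove the easy inequality by pushing a minimal CP decomposition of $\tensorG$ through the factor matrices termwise (their Lemma~\ref{Lem:TensorProdMatrProd}), and obtain the reverse inequality by constructing the explicit left inverses $\bB^{(n)}=(\bA^{(n)\rm H}\bA^{(n)})^{-1}\bA^{(n)\rm H}$ to recover $\tensorG$ from $\tensorX$ (their Lemma~\ref{lemma:IndependentTuckerDecompositionSingle}) and then applying the same termwise argument. No gaps; your observation that full column rank is only needed for the second direction matches the structure of the paper's proof.
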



Before proving Theorem \ref{Thm:CPRankInvariance}, we shall first show the following lemma. 

\begin{lemma}\label{lemma:IndependentTuckerDecompositionSingle}
For an $N$-way tensor $\tensorX \in \complN N$ with independent Tucker decomposition
\begin{equation}\label{decomposition1}
\tensorX = \tensorG \times_1 \bA^{(1)} \times_2 \cdots \times_N \bA^{(N)},
\end{equation}
there exist $\bB^{(1)}, \bB^{(2)}, \dots, \bB^{(N)}$ such that
$$
\tensorG=\tensorX \times_1 \bB^{(1)} \times_2 \cdots \times_N \bB^{(N)}\;\mbox{and}\; \tensorG \in \complR N.
$$
Also, for any $n=1, 2, \dots, N$, we have
$$
\tensorG=\tensorG\times_n \bA^{(n)}\times_n \bB^{(n)},\mbox{~} \tensorX=\tensorX\times_n \bB^{(n)}\times_n \bA^{(n)}.
$$
\end{lemma}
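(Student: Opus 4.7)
The natural strategy is to construct each $\bB^{(n)}$ as a left inverse of the corresponding factor matrix $\bA^{(n)}$, and then verify the four stated identities by direct application of Proposition~\ref{Prp:NModeProd}.

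To set this up, I would choose $\bB^{(n)}\in\compl{J_n\times I_n}$ to be any left inverse of $\bA^{(n)}$ (for concreteness, the Moore--Penrose pseudoinverse $(\bA^{(n)H}\bA^{(n)})^{-1}\bA^{(n)H}$). Such an inverse exists because the Tucker decomposition is assumed independent, so $\bA^{(n)}$ has full column rank and therefore admits a matrix $\bB^{(n)}$ with $\bB^{(n)}\bA^{(n)}=\bI_{J_n}$. The dimensions of these $\bB^{(n)}$ immediately guarantee that $\tensorX\times_1\bB^{(1)}\times_2\cdots\times_N\bB^{(N)}$ has size $J_1\times\cdots\times J_N$, which is the required shape of the core.

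Next I would derive each identity using the two rules in Proposition~\ref{Prp:NModeProd}, namely cross-mode commutativity and same-mode composition via matrix multiplication. The identity $\tensorG=\tensorG\times_n\bA^{(n)}\times_n\bB^{(n)}$ follows from a single collapse:
$$\tensorG\times_n\bA^{(n)}\times_n\bB^{(n)}=\tensorG\times_n(\bB^{(n)}\bA^{(n)})=\tensorG\times_n\bI_{J_n}=\tensorG.$$
For $\tensorG=\tensorX\times_1\bB^{(1)}\times_2\cdots\times_N\bB^{(N)}$, I would substitute (\ref{decomposition1}) into the right-hand side and, using cross-mode commutativity, peel off each pair $\bB^{(n)}\bA^{(n)}=\bI_{J_n}$ one mode at a time until only $\tensorG$ remains. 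The final identity $\tensorX=\tensorX\times_n\bB^{(n)}\times_n\bA^{(n)}$ reduces by same-mode composition to $\tensorX\times_n(\bA^{(n)}\bB^{(n)})=\tensorX$; substituting (\ref{decomposition1}) once more and using $\bA^{(n)}\bB^{(n)}\bA^{(n)}=\bA^{(n)}(\bB^{(n)}\bA^{(n)})=\bA^{(n)}$ inside the mode-$n$ slot recovers $\tensorX$ exactly.

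The only nontrivial ingredient is the existence of a left inverse $\bB^{(n)}$, and this is handed to us for free by the independence assumption. After that, every assertion is bookkeeping with the two rules in Proposition~\ref{Prp:NModeProd}, so I do not anticipate any serious obstacle; the main thing to be careful about is maintaining the correct order when repeatedly moving mode-$n$ factors past factors acting on other modes.
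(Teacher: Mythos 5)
Your proposal is correct and follows essentially the same route as the paper: both take $\bB^{(n)}=(\bA^{(n)H}\bA^{(n)})^{-1}\bA^{(n)H}$ as the left inverse guaranteed by full column rank, and verify all the identities by the same-mode composition and cross-mode commutativity rules of Proposition~\ref{Prp:NModeProd}, including the final identity via $\bA^{(n)}\bB^{(n)}\bA^{(n)}=\bA^{(n)}$ inside the decomposition. No gaps.
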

\noindent {\bf Proof:} Since $\bA^{(n)}$ is a tall matrix (columns are linearly independent), $\bA^{(n)\rm H}\bA^{(n)}$ is invertible. By letting $$\bB^{(n)}=(\bA^{(n)\rm H}\bA^{(n)})^{\rm -1}\bA^{(n)\rm H}$$
we have $\bB^{(n)}\bA^{(n)} = (\bA^{(n)\rm H}\bA^{(n)})^{\rm -1}\bA^{(n)\rm H}\bA^{(n)}=\bI_{I_n}$. As a result,
$$\tensorG\times_n \bA^{(n)}\times_n \bB^{(n)}=\tensorG\times_n(\bB^{(n)}\bA^{(n)})=\tensorG \times_n \bI_{R_n}= \tensorG.$$
Moreover, since
$\tensorX = \tensorG \times_1 \bA^{(1)} \times_2 \cdots \times_N \bA^{(N)}$,
applying $\times_1 \bB^{(1)} \times_2 \cdots \times_N \bB^{(N)}$ on both sides of \eqref{decomposition1} yields:
\begin{eqnarray*}
\tensorX \times_1 \bB^{(1)} \times_2 \cdots \times_N \bB^{(N)} &=& \tensorG \times_1 \bA^{(1)} \times_2 \cdots \times_N \bA^{(N)}\times_1 \bB^{(1)} \times_2 \cdots \times_N \bB^{(N)}\\
&=& \tensorG \times_1 (\bB^{(1)} \bA^{(1)}) \times_2 \cdots \times_N (\bB^{(N)} \bA^{(N)}) = \tensorG.
\end{eqnarray*}
In a similar vein, applying $\times_n  \bB^{(n)}\times_n \bA^{(n)}$ on both sides of \eqref{decomposition1} yields
$$\tensorX\times_n  \bB^{(n)}\times_n \bA^{(n)}=\tensorG \times_1 \bA^{(1)} \times_2 \cdots \times_n \bA^{(n)}\bB^{(n)}\bA^{(n)} \dots \times_N \bA^{(N)}=\tensorX.$$
\hfill $\square$\\
Note that the above lemma leads to
an exact independent Tucker decomposition of a given tensor $\tensorX$. First, for each $n$ one performs a mode-$n$ matricization on $\tensorX$
to get $\bX_{(1)}, \cdots, \bX_{(N)}$. Then, for each mode-$n$ one computes a matrix factorization such that $\bX_{(n)} = \bA^{(n)} \bC^{(n)}$ and $\bA^{(n)}$ has full column rank.
Finally,
letting $\bB^{(n)} = \bA^{(n)} \big{(} (\bA^{(n)})^H\bA^{(n)}\big{)}^{-1}$, we have
\begin{eqnarray*}
\left(\tensorX \times_n (\bA^{(n)})^H \times_n \bB^{(n)} \right)_{(n)} &=& \left(\tensorX \times_n \big{(}\bA^{(n)} \big{(}(\bA^{(n)})^H\bA^{(n)}\big{)}^{-1}  (\bA^{(n)})^H\big{)}  \right)_{(n)} \\
& = & \bA^{(n)} \big{(}(\bA^{(n)})^H\bA^{(n)}\big{)}^{-1}  (\bA^{(n)})^H \bX_{(n)}\\
& = & \bA^{(n)} \big{(}(\bA^{(n)})^H\bA^{(n)}\big{)}^{-1}  (\bA^{(n)})^H  \bA^{(n)} \bC^{(n)} \\
& = & \bA^{(n)} \bC^{(n)} = \bX_{(n)} .
\end{eqnarray*}
Moreover, due to the one-to-one correspondence between a tensor and its mode matricization we conclude that $\tensorX \times_n (\bA^{(n)})^H \times_n \bB^{(n)} = \tensorX $.
Now, 
by letting
$
\tensorG = \tensorX \times_1 (\bA^{(1)})^H \times_2 \cdots \times_N (\bA^{(N)})^H,
$
an exact independent Tucker decomposition
$\llbracket \tensorG; \bB^{(1)}, \ldots, \bB^{(N)} \rrbracket$ of $\tensorX$ follows.


\begin{lemma}   \label{Lem:TensorProdMatrProd}
For any $N$-way tensor
$$
\tensorX = \sum_{t=1}^r \ba ^{(1,t)} \circ\ba ^{(2,t)} \circ \cdots \circ \ba^{(N,t)}
$$
assuming the multiplications are compatible
we have
$$
\tensorX \times_n \bB = \sum^r _{t=1} \ba ^{(1,t)} \circ \cdots \circ \ba ^{(n-1,t)}\circ (\bB\ba ^{(n,t)})\circ \ba ^{(n+1,t)} \circ \cdots \circ \ba ^{(N,t)}.
$$
\end{lemma}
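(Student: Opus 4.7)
The plan is to reduce the general case to a single rank-one tensor by exploiting linearity of the mode-$n$ product, and then verify the rank-one case by a direct entrywise computation.

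First I would observe that the mode-$n$ product is linear in its tensor argument: for tensors $\tensorY_1, \tensorY_2$ of matching size and scalars $\alpha_1,\alpha_2$, one has $(\alpha_1\tensorY_1 + \alpha_2\tensorY_2)\times_n \bB = \alpha_1(\tensorY_1\times_n \bB) + \alpha_2(\tensorY_2\times_n \bB)$, which is immediate from the summation formula defining $\times_n$. Applying this to $\tensorX = \sum_{t=1}^r \ba^{(1,t)}\circ\cdots\circ\ba^{(N,t)}$, it suffices to prove the identity for a single rank-one tensor $\tensorY = \ba^{(1)}\circ\cdots\circ\ba^{(N)}$, i.e.\ to show
\[
\tensorY\times_n \bB = \ba^{(1)}\circ\cdots\circ\ba^{(n-1)}\circ(\bB\ba^{(n)})\circ\ba^{(n+1)}\circ\cdots\circ\ba^{(N)}.
\]

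For the rank-one case, I would compare entries directly. By definition of the outer product, the $(i_1,\ldots,i_N)$-th entry of $\tensorY$ equals $a^{(1)}_{i_1}\cdots a^{(N)}_{i_N}$. Applying the defining formula for $\times_n$ given in the preliminaries, the $(i_1,\ldots,i_{n-1},j,i_{n+1},\ldots,i_N)$-th entry of $\tensorY\times_n \bB$ equals
\[
\sum_{i_n=1}^{I_n} a^{(1)}_{i_1}\cdots a^{(n)}_{i_n}\cdots a^{(N)}_{i_N}\,b_{j i_n} = a^{(1)}_{i_1}\cdots a^{(n-1)}_{i_{n-1}}\,(\bB\ba^{(n)})_j\,a^{(n+1)}_{i_{n+1}}\cdots a^{(N)}_{i_N},
\]
where I pulled the factors independent of $i_n$ out of the sum and recognized the remaining sum $\sum_{i_n} b_{j i_n}a^{(n)}_{i_n}$ as the $j$-th coordinate of $\bB\ba^{(n)}$. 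This is precisely the $(i_1,\ldots,j,\ldots,i_N)$-th entry of the claimed outer product, so the two tensors agree entrywise.

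There is no real obstacle here: the argument is just linearity plus a one-line entrywise check, and the only thing to be careful about is keeping the indexing conventions of Proposition~\ref{Prp:NModeProd} consistent (in particular, that $\bB$ acts as left multiplication on the mode-$n$ fiber). Combining the rank-one identity with the linearity reduction then yields the stated formula for the general CP sum.
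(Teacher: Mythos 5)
Your proof is correct, and it takes a genuinely different route from the paper's. The paper proves this lemma by rewriting the CP sum in Tucker form as $\tensorX = \llbracket \tensorI; \bA^{(1)}, \ldots, \bA^{(N)}\rrbracket$ with $\bA^{(n)}=[\ba^{(n,1)},\ldots,\ba^{(n,r)}]$, absorbing $\bB$ into the $n$-th factor matrix via the mode-product composition rule to get $\llbracket \tensorI; \ldots, \bB\bA^{(n)}, \ldots\rrbracket$, and then reading the result back off as a sum of outer products. You instead use linearity of $\times_n$ to reduce to a single rank-one term and verify that case by a direct entrywise computation. Your argument is more elementary and self-contained: it relies only on the defining summation formula for $\times_n$ and the outer product, whereas the paper's argument leans on the Tucker formalism it has already set up (and its final step, converting $\llbracket \tensorI; \ldots\rrbracket$ back into a sum of rank-one terms, implicitly contains the same entrywise identity you prove explicitly). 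The paper's version is shorter on the page and reuses machinery; yours makes the underlying computation visible and would stand on its own without Proposition~\ref{Prp:NModeProd}'s composition rule. Both are complete proofs.
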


\noindent {\bf Proof:} Denote $\bA^{(n)}=[\ba^{(n,1)}, \ldots, \ba^{(n,r)}]$. 
We have $\tensorX = \llbracket \tensorI; \bA^{(1)}, \ldots, \bA^{(N)}\rrbracket$, and so
\begin{eqnarray*}
\tensorX \times_n \bB
& = &
\llbracket \tensorI; \bA^{(1)}, \ldots, \bA^{(N)}\rrbracket  \times_n \bB
\\ & = &
\llbracket \tensorI; \bA^{(1)}, \ldots, \bA^{(n-1)}, \bB\bA^{(n)}, \bA^{(n+1)},\ldots  , \bA ^{(N)} \rrbracket
\\ & = &
\sum_{t=1}^r  \ba ^{(1,t)} \circ \cdots \circ \ba^{(n-1,t)} \circ \left( \bB\ba^{(n,t)} \right) \circ \ba^{(n+1,t)} \circ \cdots \circ \ba ^{(N,t)},
\end{eqnarray*}
which completes the proof. \hfill $\square$

\noindent Now we are ready to prove Theorem \ref{Thm:CPRankInvariance}.\\
\noindent {\bf Proof of Theorem  \ref{Thm:CPRankInvariance}:} Suppose the core tensor $\tensorG$ has a CP rank $r$ associated with
the decomposition:
$$
\tensorG = \sum_{t=1}^r \bb ^{(1,t)} \circ\bb ^{(2,t)} \circ \cdots \circ \bb^{(N,t)}.
$$
Then Lemma \ref{Lem:TensorProdMatrProd} suggests that
\begin{eqnarray*}
\tensorX &=& \tensorG \times_1 \bA^{(1)} \times_2 \cdots \times_N \bA^{(N)}\\
& = & \sum_{t=1}^r \left(\bA^{(1)} \bb ^{(1,t)}\right) \circ \cdots \circ \left(\bA^{(N)} \bb^{(N,t)}\right),
\end{eqnarray*}
which is a valid rank-1 decomposition of $\tensorX$ {with $r$ rank-1 terms}, implying that $\rank_{CP}(\tensorX) \leq r = \rank_{CP}(\tensorG)$.

On the other hand, since the Tucker decomposition is independent, Lemma \ref{lemma:IndependentTuckerDecompositionSingle} holds and there exist $\bB^{(1)}, \bB^{(2)}, \dots, \bB^{(N)}$ such that
$$
\tensorG=\tensorX \times_1 \bB^{(1)} \times_2 \cdots \times_N \bB^{(N)}.
$$
Applying the same argument, we have $\rank_{CP}(\tensorG) \leq \rank_{CP}(\tensorX)$. 
This completes the proof for Theorem \ref{Thm:CPRankInvariance}.
\hfill $\square$

Since an orthonormal Tucker decomposition is independent, as a direct consequence of above theorem, we conclude that if $\tensorG$ is the core tensor of $\tensorX$ under the orthonormal Tucker decomposition then $\rank_{CP}(\tensorX) = \rank_{CP}(\tensorG)$.

\subsection{Tensors with a symmetric structure}
In this subsection, we establish similar results for the {\df symmetric} tensors.
Formally speaking, a tensor is {\df symmetric} if the length along all the directions are equal, and the elements are invariant under any permutation of the indices, i.e.
$$
x_{i_{\sigma(1)} i_{\sigma(2)} \cdots i_{\sigma(N)}}=x_{i_1i_2\cdots i_N}
$$
where $\sigma(\cdot)$ is any given permutation function of $\{ 1, 2, \ldots, N\}$.
Symmetric tensors are well studied; see e.g.~\cite{STaSTR}. Parallel to the definitions in the preceding sections, we have:
\begin{definition}   \label{Def:SymmetricCP}
For an $N$-way symmetric tensor $\tensorX \in \compl{I\times I \times \cdots \times I}$, a {\df symmetric CP decomposition} of size $r$ is to represent $\tensorX$ as follows
\[
\tensorX = \sum_{t=1} ^r \ba^{(t)} \circ \cdots \circ \ba^{(t)}.
\]
The {\df symmetric rank} of \tensorX, denoted by $\rank_{S}(\tensorX)$, is the minimal integer $r$ such that a size-$r$ symmetric CP decomposition exists.
\end{definition}

\begin{definition}   \label{Def:SymmetricTucker}
For an $N$-way symmetric tensor $\tensorX \in \compl{I\times I \times \cdots \times I}$, an exact Tucker decomposition is called {\df symmetric} if all the factor matrices are identical; i.e., the Tucker decomposition is of the form
$$
\tensorX = \llbracket \tensorG^s;\bX, \bX, \ldots, \bX \rrbracket.
$$
As before, a symmetric Tucker decomposition is said to be {\df independent} if the factor matrix has full column rank; a symmetric Tucker decomposition is said to be {\df orthonormal} if the factor matrix has orthonormal columns.
\end{definition}
We remark that for an $N$-way symmetric tensor $\tensorX \in \compl{I \times I \times \cdots \times I}$, its matricizations of all modes are identical to each other, i.e., $\tensorX_{(m)} = \bX_{(n)}$ for all $m, n =1, \ldots, N$. Based on this observation, an independent symmetric Tucker decomposition of a given symmetric tensor $\tensorX$ can be constructed as follows.
First, perform a symmetric matrix factorization such that $\bX = \bA \bA^{H}$ and $\bA$ has full column rank.
Then, construct
$$
\tensorG = \tensorX \times_1 \bA^H \times_2 \cdots \times_N \bA^H,
$$
and $\bB = \bA \left(\bA^H \bA \right)^{-1}$. Following a similar argument for asymmetric tensors, it can be verified that
$\llbracket \tensorG; \bB, \ldots, \bB \rrbracket$
is an independent symmetric Tucker decomposition of $\tensorX$. Now we present the invariance of symmetric CP rank.

\begin{theorem}   \label{thm:SymmetricTuckerConst}
For any symmetric tensor $\tensorX \in \compl{I\times I\times \cdots \times I}$ and its independent symmetric decomposition
$$
\tensorX = \llbracket \tensorG^s;\bX, \bX, \ldots, \bX \rrbracket,
$$
the core tensor $\tensorG ^s \in \compl{J\times J\times \cdots \times J}$ is also symmetric, and $\rank_S (\tensorG ^s)=\rank_S(\tensorX)$.
\end{theorem}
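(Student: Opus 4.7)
The plan is to mirror the proof of Theorem \ref{Thm:CPRankInvariance}, exploiting the fact that in a symmetric independent Tucker decomposition all $N$ factor matrices coincide with the same $\bX$. By Lemma \ref{lemma:IndependentTuckerDecompositionSingle} applied with each $\bA^{(n)}=\bX$, a single left inverse $\bB := (\bX^H \bX)^{-1}\bX^H$ satisfies $\bB\bX = \bI$ and
\begin{equation*}
\tensorG^s = \tensorX \times_1 \bB \times_2 \bB \times_3 \cdots \times_N \bB .
\end{equation*}
This single multilinear operation, applied uniformly across every mode, is the key device for transferring both the symmetry property and the symmetric CP rank.

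To show $\tensorG^s$ is symmetric, I would expand entrywise as
\begin{equation*}
g^s_{j_1 \cdots j_N} \;=\; \sum_{i_1,\ldots,i_N} x_{i_1 \cdots i_N}\, b_{j_1 i_1} b_{j_2 i_2} \cdots b_{j_N i_N},
\end{equation*}
and, for any permutation $\sigma$ of $\{1,\ldots,N\}$, substitute $i_k = i'_{\sigma(k)}$ so that $\prod_k b_{j_{\sigma(k)} i_k} = \prod_m b_{j_m i'_m}$. The entry $x_{i_1 \cdots i_N}$ becomes $x_{i'_{\sigma(1)} \cdots i'_{\sigma(N)}}$, which by symmetry of $\tensorX$ equals $x_{i'_1 \cdots i'_N}$, giving $g^s_{j_{\sigma(1)} \cdots j_{\sigma(N)}} = g^s_{j_1 \cdots j_N}$.

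For the rank equality I would argue both inequalities using Lemma \ref{Lem:TensorProdMatrProd}. Starting from a minimal symmetric CP decomposition $\tensorG^s = \sum_{t=1}^r \bb^{(t)}\circ \cdots \circ \bb^{(t)}$ and applying mode-$n$ product with $\bX$ for $n = 1,\ldots,N$ yields
\begin{equation*}
\tensorX \;=\; \sum_{t=1}^r \bigl(\bX\bb^{(t)}\bigr)\circ \cdots \circ \bigl(\bX\bb^{(t)}\bigr),
\end{equation*}
a valid symmetric CP decomposition of $\tensorX$ with $r$ terms, so $\rank_S(\tensorX) \le \rank_S(\tensorG^s)$. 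Conversely, starting from a minimal symmetric CP decomposition of $\tensorX$ and applying mode-$n$ product with $\bB$ for each $n$ produces a symmetric CP decomposition of $\tensorG^s$ with the same number of terms, giving the reverse inequality.

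The only subtle point, more of a consistency check than a genuine obstacle, is that both directions require a \emph{common} matrix to be applied across all modes so that the resulting rank-one terms retain the form $\bv \circ \cdots \circ \bv$; this is automatic here since the Tucker decomposition is symmetric (a single $\bX$) and since $\bB$ is uniquely determined by $\bX$ and is therefore also mode-independent. Everything else is a direct rerun of the arguments already developed for the general (asymmetric) case.
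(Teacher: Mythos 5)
Your proposal is correct and follows essentially the same route as the paper's own proof: it establishes symmetry of $\tensorG^s$ by the same entrywise permutation/reindexing argument using $\tensorG^s = \tensorX \times_1 \bB \times_2 \cdots \times_N \bB$ from Lemma \ref{lemma:IndependentTuckerDecompositionSingle}, and then obtains both rank inequalities by pushing a minimal symmetric CP decomposition through the common factor matrix via Lemma \ref{Lem:TensorProdMatrProd}. The only differences are cosmetic (index conventions for the entries of $\bB$), so there is nothing substantive to add.
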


\noindent {\bf Proof:}
By Lemma~\ref{lemma:IndependentTuckerDecompositionSingle} we have
\begin{equation}\label{symmetic-tucker-rep}
\tensorG^s=\tensorX \times_1 \bB \times_2 \cdots \times_N \bB,
\end{equation}
where $\bB = (\bX^H\bX)^{-1}\bX^H \in \compl{J \times I}$. Now we show that $\tensorG^s$
is symmetric. For any $(j_1 j_2 \dots j_N)$ and any permutation function $\sigma(\cdot)$ of $\{1, 2, \ldots, N\}$, by definition of the mode product, we have
\begin{eqnarray*}
g_{j_1 j_2 \dots j_N}^s
&=&
\sum_{i_1, \ldots, i_N=1} ^I x_{i_1 i_2 \dots i_N} \cdot b_{i_1j_1} b_{i_2j_2} \cdots b_{i_Nj_N}
\\  &=&
\sum_{i_1, \ldots, i_N=1} ^I x_{i_1 i_2 \dots i_N} \cdot b_{i_{\sigma(1)}j_{{\sigma(1)}}} b_{i_{\sigma(2)}j_{\sigma(2)}} \cdots b_{i_{\sigma(N)}j_{\sigma(N)}}
\\  &=&
\sum_{i_1, \ldots, i_N=1} ^I x_{i_{\sigma(1)} i_{\sigma(2)} \dots i_{\sigma(N)}} \cdot b_{i_{\sigma(1)}j_{{\sigma(1)}}} b_{i_{\sigma(2)}j_{\sigma(2)}} \cdots b_{i_{\sigma(N)}j_{\sigma(N)}} \\
&=& g_{j_{\sigma(1)}j_{\sigma(2)}\cdots j_{\sigma(N)}}^s,
\end{eqnarray*}
where the third equality follows from the fact that $\tensorX$ is symmetric.

Now that $\tensorG^s$ is symmetric, we may assume its symmetric CP rank to be $r$, with
the decomposition
$$
\tensorG^s = \sum_{t=1}^r \underbrace{ \bb ^{t} \circ \cdots \circ \bb^{t}}_{N}.
$$
Invoking Lemma \ref{Lem:TensorProdMatrProd} yields
\begin{eqnarray*}
\tensorX &=& \tensorG^s \times_1 \bX \times_2 \cdots \times_N \bX\\
& = & \sum_{t=1}^r \underbrace{\left(\bX \bb ^{t}\right) \circ \cdots \circ \left(\bX \bb ^{t}\right)}_{N},
\end{eqnarray*}
which is a symmetric rank-1 decomposition of $\tensorX$. This implies that $\rank_{S}(\tensorX) \leq \rank_{S}(\tensorG^s)$.
Noting that we can also decompose $\tensorG^s$ in the form of \eqref{symmetic-tucker-rep},
by the same argument we have $\rank_{S}(\tensorG^s) \leq \rank_{S}(\tensorX)$, leading to $\rank_{S}(\tensorX) = \rank_{S}(\tensorG^s)$.
\hfill $\square$

\section{Invariance of the tensor norms and the border rank}\label{sec:norm}

In this section, we study the invariance properties of various tensor norms and 
the so-called border rank.
These relationships enable us to measure the error between the tensor and an approximative decomposition.

\subsection{Invariance of tensor norms under independent Tucker decomposition}
We first consider the tensor Frobenius norm. 

\begin{theorem}   \label{Thm:FrobeniousNormInvariance}
For any given tensor $\tensorX\in\complN N$ with independent Tucker decomposition
$
\tensorX = 
\llbracket \tensorG; \bX^{(1)}, \ldots, \bX^{(N)} \rrbracket
$,
we have that
\begin{equation}\label{equ:FrobeniousNormInvariance}
\alpha \|\tensorG\|_F \leq \|\tensorX\|_F \leq \beta \|\tensorG\|_F,
\end{equation}
where $\beta = \|\bX^{(1)}\|_2\|\bX^{(2)}\|_2\cdots \|\bX^{(N)}\|_2$ and $\alpha = \beta/(\prod_{n=1}^N{\kappa(\bX^{(n)})})$
with $\kappa(\bX^{(n)})$ being the condition number of the matrix $\bX^{(n)}$ for all $n$.
\end{theorem}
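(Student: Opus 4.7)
The plan is to peel off the factor matrices one mode at a time, using Proposition~\ref{Prp:NModeProd} to turn each mode-$n$ product into a matrix product of $\bX^{(n)}$ against the mode-$n$ matricization, and then sandwich the Frobenius norm by the extreme singular values of $\bX^{(n)}$.

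First I would observe two elementary facts. The Frobenius norm of a tensor equals that of any of its matricizations, so $\|\tensorY\|_F = \|\bY_{(n)}\|_F$ for every $n$. Moreover, for any matrix $\bA \in \compl{I \times J}$ of full column rank and any matrix $\bZ$ for which $\bA\bZ$ is defined, the inequality
\begin{equation*}
\sigma_{\min}(\bA)\,\|\bZ\|_F \;\leq\; \|\bA\bZ\|_F \;\leq\; \sigma_{\max}(\bA)\,\|\bZ\|_F
\end{equation*}
follows from $\|\bA\bZ\|_F^2 = \operatorname{tr}(\bZ^H \bA^H \bA \bZ)$ together with $\sigma_{\min}(\bA)^2 \bI \preceq \bA^H\bA \preceq \sigma_{\max}(\bA)^2 \bI$, which is where the full-column-rank hypothesis is used (otherwise $\sigma_{\min}(\bA)=0$ and the lower bound is trivial).

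Next I would define the intermediate tensors $\tensorG_0 = \tensorG$ and $\tensorG_n = \tensorG \times_1 \bX^{(1)} \times_2 \cdots \times_n \bX^{(n)}$ for $n = 1, \ldots, N$, so that $\tensorG_N = \tensorX$. Applying Proposition~\ref{Prp:NModeProd} gives $(\tensorG_n)_{(n)} = \bX^{(n)} (\tensorG_{n-1})_{(n)}$, and hence by the two elementary facts above,
\begin{equation*}
\sigma_{\min}(\bX^{(n)})\,\|\tensorG_{n-1}\|_F \;\leq\; \|\tensorG_n\|_F \;\leq\; \sigma_{\max}(\bX^{(n)})\,\|\tensorG_{n-1}\|_F.
\end{equation*}
Telescoping these inequalities from $n=1$ to $n=N$ yields
\begin{equation*}
\Bigl(\prod_{n=1}^N \sigma_{\min}(\bX^{(n)})\Bigr)\|\tensorG\|_F \;\leq\; \|\tensorX\|_F \;\leq\; \Bigl(\prod_{n=1}^N \sigma_{\max}(\bX^{(n)})\Bigr)\|\tensorG\|_F .
\end{equation*}
The right-hand constant is exactly $\beta$ since $\|\bX^{(n)}\|_2 = \sigma_{\max}(\bX^{(n)})$. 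For the left-hand constant, rewrite $\sigma_{\min}(\bX^{(n)}) = \sigma_{\max}(\bX^{(n)})/\kappa(\bX^{(n)})$; taking the product gives $\beta / \prod_n \kappa(\bX^{(n)}) = \alpha$, completing the inequality.

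There is no real obstacle here: the full-column-rank assumption is what makes $\sigma_{\min}(\bX^{(n)}) > 0$, ensuring $\alpha > 0$, and the Proposition~\ref{Prp:NModeProd} identity is what lets the Frobenius-norm bound be applied one mode at a time. The only item that deserves a line of justification is the matricization/Frobenius-norm invariance, which one could either cite or simply note follows from the fact that matricization is a bijective rearrangement of the tensor's entries.
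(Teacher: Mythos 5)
Your proof is correct, and it reaches the stated constants by a route that differs from the paper's in one essential respect. The paper only ever proves a one-sided estimate, Lemma~\ref{Lem:FrobeniusMatrixProduct}, namely $\|\tensorX \times_n \bA\|_F \le \|\bA\|_2 \|\tensorX\|_F$; it gets the upper bound in \eqref{equ:FrobeniousNormInvariance} by iterating this, and it gets the lower bound by a reversal trick: invoke Lemma~\ref{lemma:IndependentTuckerDecompositionSingle} to write $\tensorG = \llbracket \tensorX; \bY^{(1)},\ldots,\bY^{(N)}\rrbracket$ with $\bY^{(n)} = \bigl((\bX^{(n)})^H\bX^{(n)}\bigr)^{-1}(\bX^{(n)})^H$, apply the same one-sided lemma in that direction, and then compute $\|\bY^{(n)}\|_2 = 1/\sigma_{\min}(\bX^{(n)})$ from the SVD of $\bX^{(n)}$. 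You instead prove a genuinely two-sided bound $\sigma_{\min}(\bA)\|\bZ\|_F \le \|\bA\bZ\|_F \le \sigma_{\max}(\bA)\|\bZ\|_F$ for full-column-rank $\bA$ and telescope it mode by mode, which makes the pseudo-inverse and Lemma~\ref{lemma:IndependentTuckerDecompositionSingle} unnecessary here and exposes directly why $\alpha = \prod_n \sigma_{\min}(\bX^{(n)})$. Your version is more self-contained and arguably cleaner for this particular theorem; the paper's version has the advantage that the same ``apply the upper bound to the inverted decomposition'' pattern is reused verbatim for the $p$-quasi-norm and the border rank, where a direct two-sided analogue of your singular-value inequality is not available. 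One small point worth making explicit in your write-up is the identity $\|\tensorY\|_F = \|\bY_{(n)}\|_F$, which you correctly flag as a bijective rearrangement of entries; the paper uses it silently inside Lemma~\ref{Lem:FrobeniusMatrixProduct} as well.
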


The following lemma presents a bound on the Frobenius norm of mode-$n$ matrix multiplication, which leads to the proof of Theorem \ref{Thm:FrobeniousNormInvariance}.
\begin{lemma}   \label{Lem:FrobeniusMatrixProduct}
For any given tensor $\tensorX \in \complN N$ and matrix $\bA \in \compl{J_n \times I_n}$, we have
$$
\|\tensorX \times_n \bA \|_F \leq \|\bA\|_2 \|\tensorX  \|_F.
$$
\end{lemma}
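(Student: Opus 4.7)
The plan is to reduce the tensor-level inequality to a matrix-level inequality via mode-$n$ matricization, which is a norm-preserving reshaping.

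First, I would observe that the Frobenius norm is invariant under matricization, since both $\|\tensorX\|_F$ and $\|\bX_{(n)}\|_F$ are simply the square root of the sum of squared absolute values of all entries of the same multi-indexed array (only the grouping of indices into rows and columns differs). In particular, $\|\tensorX\times_n\bA\|_F = \|(\tensorX\times_n\bA)_{(n)}\|_F$.

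Next, invoke Proposition~\ref{Prp:NModeProd} to rewrite the matricization of the mode-$n$ product as an ordinary matrix product: $(\tensorX\times_n\bA)_{(n)} = \bA\,\bX_{(n)}$. So the claim reduces to $\|\bA\,\bX_{(n)}\|_F \leq \|\bA\|_2\,\|\bX_{(n)}\|_F$, which is a standard submultiplicative inequality between the spectral and Frobenius norms. I would prove it by writing the Frobenius norm column-wise: if $\bX_{(n)}$ has columns $\bx_1,\ldots,\bx_K$, then
$$\|\bA\,\bX_{(n)}\|_F^2 = \sum_{k=1}^K \|\bA\bx_k\|^2 \leq \sum_{k=1}^K \|\bA\|_2^2\,\|\bx_k\|^2 = \|\bA\|_2^2\,\|\bX_{(n)}\|_F^2,$$
using the definition $\|\bA\|_2 = \max_{\|\bx\|=1}\|\bA\bx\|$ in the middle step. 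Taking square roots yields the matrix inequality.

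Combining the three ingredients—matricization preserves Frobenius norm, the mode-$n$ product matricizes to left-multiplication by $\bA$, and the submultiplicative bound above—gives $\|\tensorX\times_n\bA\|_F = \|\bA\,\bX_{(n)}\|_F \leq \|\bA\|_2\,\|\bX_{(n)}\|_F = \|\bA\|_2\,\|\tensorX\|_F$, completing the proof.

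There is no real obstacle here; the lemma is essentially a bookkeeping translation of the well-known matrix inequality $\|\bA\bM\|_F \leq \|\bA\|_2\|\bM\|_F$ through the mode-$n$ matricization, both pieces of which are available from the preliminaries.
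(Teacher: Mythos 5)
Your proposal is correct and follows essentially the same route as the paper: both reduce the claim via mode-$n$ matricization to the matrix inequality $\|\bA\,\bX_{(n)}\|_F \leq \|\bA\|_2\|\bX_{(n)}\|_F$ and then prove that column-by-column using the consistency of the spectral norm with the Euclidean vector norm. The only (harmless) difference is that you spell out explicitly why matricization preserves the Frobenius norm, a step the paper treats as implicit.
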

\noindent {\bf Proof:}  The claimed inequality is equivalent to
$$
\| \bA \bX_{(n)} \|_F ^2 \leq \| \bA\|_2 ^2 \|\bX_{(n)}  \|_F ^2.
$$
To show this, let $\bX_{(n,k)}$ be the $k$-th column of $\bX_{(n)}$. We have
\begin{eqnarray*}
\| \bA \bX_{(n)} \|_F ^2
&=&
\sum_k \|\bA \bX_{(n,k)}\|^2 _F
\\ &\leq &
\sum_k \|\bA\|_2^2 \| \bX_{(n,k)}\|^2 _F
\\ &=&
\|\bA\|_2^2 \left( \sum_k \| \bX_{(n,k)}\|^2 _F \right)
\\ &=&
\|\bA\|_2^2  \| \bX_{(n)}\|^2 _F ,
\end{eqnarray*}
where the inequality is due to the consistency of the matrix spectral norm and the Euclidean vector norm. 
\hfill $\square$


\noindent {\bf Proof
 of Theorem  \ref{Thm:FrobeniousNormInvariance}:}
 Since $\tensorX = \llbracket \tensorG; \bX^{(1)}, \ldots, \bX^{(N)} \rrbracket$, by Lemma~\ref{Lem:FrobeniusMatrixProduct},
$$
 \|\tensorX \|_F=\|\llbracket \tensorG; \bX^{(1)}, \ldots, \bX^{(N)} \rrbracket \|_F \leq  \|\tensorG\|_F\|\bX^{(1)}\|_2\|\bX^{(2)}\|_2\cdots \|\bX^{(N)}\|_2.
 $$
Thus, we may simply let $\beta=\|\bX^{(1)}\|_2\|\bX^{(2)}\|_2\dots \|\bX^{(N)}\|_2$ and obtain the upper bound in~\eqref{equ:FrobeniousNormInvariance}.
On the other hand, by Lemma~\ref{lemma:IndependentTuckerDecompositionSingle} it follows that
$$
\tensorG=\llbracket \tensorX; \bY^{(1)}, \ldots, \bY^{(N)} \rrbracket ,
$$
with $\bY^{(n)} = \left((\bX^{(n)})^H\bX^{(n)}\right)^{-1}(\bX^{(n)})^H$. Therefore, from Lemma~\ref{Lem:FrobeniusMatrixProduct} one has
$$
 \|\tensorG \|_F=\|\llbracket \tensorX; \bY^{(1)}, \ldots, \bY^{(N)} \rrbracket \|_F \leq  \|\tensorX\|_F\|\bY^{(1)}\|_2\|\bY^{(2)}\|_2 \cdots \|\bY^{(N)}\|_2.
$$
Now we let $\alpha=\frac{1}{\|\bY^{(1)}\|_2\|\bY^{(2)}\|_2\cdots \|\bY^{(N)}\|_2}$, and it remains to prove $\beta/\alpha = \prod_{n=1}^N{\kappa(\bX^{(n)})}$.
To this end, for any $n$, suppose the matrix $\bX^{(n)}$ has SVD: $\bX^{(n)} = \bU^{(n)} \bSigma (\bV^{(n)})^H$. Then
\begin{eqnarray*}
\bY^{(n)} &=& \left((\bX^{(n)})^H\bX^{(n)}\right)^{-1}(\bX^{(n)})^H\\
& = & \left(\bV^{(n)} \bSigma^2 (\bV^{(n)})^H\right)^{-1}\bV^{(n)} \bSigma (\bU^{(n)})^H\\
& = & \bV^{(n)} \bSigma^{-2} (\bV^{(n)})^H \bV^{(n)} \bSigma (\bU^{(n)})^H\\
& = & \bV^{(n)} \bSigma^{-1} (\bU^{(n)})^H.
\end{eqnarray*}
{Therefore, $\|\bY^{(n)}\|_2 =\frac{1}{ \sigma_{\min} (\bX^{(n)})} $ for all $n$ and $\alpha= \prod_{n=1}^N{\sigma_{\min} (\bX^{(n)})}$.} Consequently,
$$\frac{\beta}{\alpha} = \frac{\prod_{n=1}^N{\|(\bX^{(n)})\|_2}}{\prod_{n=1}^N{\sigma_{\min} (\bX^{(n)})}}= \frac{\prod_{n=1}^N{\sigma_{\max} (\bX^{(n)})}}{\prod_{n=1}^N{\sigma_{\min} (\bX^{(n)})}} = \prod_{n=1}^N{\kappa(\bX^{(n)})}.$$
\hfill $\square$

Note that when $\beta = \alpha$, the estimation (\ref{equ:FrobeniousNormInvariance}) on the Frobenius norm of $\tensorX$ becomes exact.
In general, the ratio $\beta/\alpha$ measures the {quality of approximating the Frobenius norm of ${\cal X}$ by that of $\alpha\, {\cal G}$}. When the factor matrix is orthonormal, the associated condition number is $1$, and then the Frobenius norm of the tensor and its Tucker core are equal.
\begin{corollary}
For an orthonormal Tucker decomposition, its factor
matrix $\bX^{(n)}$ is orthonormal. Thus $\|\bX^{(n)}\|_2=\|(\bX^{(n)})^{\mathrm H}\|_2=1$ for $n =1,2,..., N$, and we have $\|\tensorX\|_F=\|\tensorG\|_F$.
\end{corollary}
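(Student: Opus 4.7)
The plan is to deduce the corollary as a direct specialization of Theorem~\ref{Thm:FrobeniousNormInvariance}. Since an orthonormal Tucker decomposition is in particular an independent Tucker decomposition (orthonormal columns are linearly independent), the theorem applies and yields
\[
\alpha \|\tensorG\|_F \leq \|\tensorX\|_F \leq \beta \|\tensorG\|_F,
\]
with $\beta = \prod_{n=1}^N \|\bX^{(n)}\|_2$ and $\alpha = \beta / \prod_{n=1}^N \kappa(\bX^{(n)})$. The goal is then to show that both $\alpha$ and $\beta$ equal $1$ under the orthonormal assumption, which collapses the inequality to an equality.

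First I would observe that each $\bX^{(n)}$ having orthonormal columns means $(\bX^{(n)})^H \bX^{(n)} = \bI$, so every singular value of $\bX^{(n)}$ equals $1$. This immediately gives $\sigma_{\max}(\bX^{(n)}) = \|\bX^{(n)}\|_2 = 1$ and $\sigma_{\min}(\bX^{(n)}) = 1$, so that $\kappa(\bX^{(n)}) = \sigma_{\max}/\sigma_{\min} = 1$ for each $n$. Plugging these into the formulas for $\alpha$ and $\beta$ gives $\beta = 1$ and $\alpha = 1/1 = 1$, so both constants in the Frobenius-norm sandwich are equal to $1$.

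Combining these two bounds yields $\|\tensorX\|_F \leq \|\tensorG\|_F$ and $\|\tensorG\|_F \leq \|\tensorX\|_F$ simultaneously, hence $\|\tensorX\|_F = \|\tensorG\|_F$. There is essentially no obstacle here: the only thing to be a little careful about is making sure $\bX^{(n)}$ is a tall matrix with $(\bX^{(n)})^H \bX^{(n)} = \bI$ (rather than $\bX^{(n)} (\bX^{(n)})^H = \bI$), which is the relevant sense of ``orthonormal columns'' used throughout the paper, so that the singular values are all $1$ and the matrix $(\bX^{(n)})^H \bX^{(n)}$ inverted in Lemma~\ref{lemma:IndependentTuckerDecompositionSingle} is the identity. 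After that, the proof is a one-line substitution into the already-established bound.
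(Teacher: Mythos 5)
Your proposal is correct and matches the paper's own reasoning: the corollary is obtained by specializing Theorem~\ref{Thm:FrobeniousNormInvariance}, noting that orthonormal columns force all singular values of each $\bX^{(n)}$ to equal $1$, hence $\kappa(\bX^{(n)})=1$ and $\alpha=\beta=1$. Your added remark about the correct sense of ``orthonormal columns'' (i.e.\ $(\bX^{(n)})^H\bX^{(n)}=\bI$ for a tall matrix) is a sensible precaution but does not change the argument.
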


Remark that the above result was shown by De Lathauwer et al.\ as Property~8 in \cite{DeLathauwerMoorVandewalle2000}.


\subsection{The quasi-$p$ norm and the tensor nuclear norm}

We proceed to 
other tensor norms in this subsection.
\begin{definition}
For any tensor $\tensorX\in \complN N$, the {\df tensor $p$-quasi norm} for $0< p \le 1$ \footnote{In a private conversation, Lek-Heng Lim pointed out to us that $\|\tensorX \|_p$ trivially equals to zero for any tensor $\tensorX$ for any $p > 1$.}
of $\tensorX$ is defined as
\begin{equation}
\begin{split}
\|\tensorX \|_p\triangleq {\inf}\bigg{\{}&
\left(\sum_{s=1}^r |\lambda_s|^p\right)^{1/p}
:\: \tensorX=\sum_{s=1}^r{\lambda_s \bx_s^{(1)}\circ \bx_s^{(2)}\circ} \dots \bx_s^{(N)},\\
&\|\bx_s^{(n)}\| =1,\;\forall s=1, 2, \dots, r,\; \forall n=1, 2, \dots , N \bigg{\}} .
\end{split}
\end{equation}
\end{definition}
When $N=2$, it reduces to the Schatten $p$-quasi norm for matrix, which plays an important role in low-rank matrix optimization~\cite{JiSzeZhouSoYe2013}.
When $p = 1$, the above definition corresponds to the tensor nuclear norm, which was originally proposed in
Grothendieck~\cite{Grothendieck1955} and Schatten~\cite{Schatten1950}. Recently, this tensor nuclear norm was applied in Yuan and Zhang~\cite{YuanZhang2015} to analyze the statistical properties of
Tensor completion problem. Friedland and Lim~\cite{FriedlandLim2014} showed that computing the nuclear norm of a given tensor is NP-hard.
Interestingly, for the quasi-$p$ norm, similar bounds between a given tensor and its core tensor hold true.

\begin{theorem}
For any given tensor $\tensorX\in\complN N$ with independent Tucker decomposition
$
\tensorX = 
\llbracket \tensorG; \bX^{(1)}, \ldots, \bX^{(N)} \rrbracket
$,
we have
$$\alpha \|\tensorG\|_p \leq \|\tensorX\|_p \leq \beta \|\tensorG\|_p,$$
where $\beta = \|\bX^{(1)}\|_2\|\bX^{(2)}\|_2\dots \|\bX^{(N)}\|_2$ and $\alpha = \beta/(\prod_{n=1}^N{\kappa(\bX^{(n)})})$
with $\kappa(\bX^{(n)})$ being the condition number of matrix $\bX^{(n)}$ for all $n$.
\end{theorem}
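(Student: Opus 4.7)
My plan is to mimic the proof of Theorem~\ref{Thm:FrobeniousNormInvariance} but work with arbitrary rank-1 decompositions of the Tucker core rather than the mode-$n$ matricization. The key tool is Lemma~\ref{Lem:TensorProdMatrProd}, which lets me push the factor matrices into a CP-style sum, together with the consistency of the spectral norm with the Euclidean vector norm.

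For the upper bound, I start with an arbitrary unit-norm rank-1 decomposition
\[
\tensorG = \sum_{s=1}^r \lambda_s\, \bx_s^{(1)}\circ \cdots \circ \bx_s^{(N)}, \qquad \|\bx_s^{(n)}\|=1,
\]
and apply Lemma~\ref{Lem:TensorProdMatrProd} in each mode to get
\[
\tensorX = \sum_{s=1}^r \lambda_s\, \bigl(\bX^{(1)}\bx_s^{(1)}\bigr) \circ \cdots \circ \bigl(\bX^{(N)}\bx_s^{(N)}\bigr).
\]
I then renormalize: setting $c_s^{(n)} = \|\bX^{(n)}\bx_s^{(n)}\|$ (these are strictly positive because $\bX^{(n)}$ has full column rank and $\bx_s^{(n)}\neq\mathbf 0$) and $\by_s^{(n)} = \bX^{(n)}\bx_s^{(n)}/c_s^{(n)}$, I obtain a valid unit-norm rank-1 decomposition of $\tensorX$ with new coefficients $\tilde\lambda_s = \lambda_s\prod_{n=1}^N c_s^{(n)}$. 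Since $c_s^{(n)} \le \|\bX^{(n)}\|_2$ by the consistency inequality, I get $|\tilde\lambda_s|\le \beta |\lambda_s|$, hence
\[
\Bigl(\sum_s |\tilde\lambda_s|^p\Bigr)^{1/p} \le \beta \Bigl(\sum_s |\lambda_s|^p\Bigr)^{1/p}.
\]
Taking the infimum on the right over all decompositions of $\tensorG$ and noting the left side is an upper bound on $\|\tensorX\|_p$ yields $\|\tensorX\|_p \le \beta\|\tensorG\|_p$.

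For the lower bound, I invoke Lemma~\ref{lemma:IndependentTuckerDecompositionSingle} to write $\tensorG = \llbracket \tensorX; \bY^{(1)},\ldots,\bY^{(N)}\rrbracket$ with $\bY^{(n)} = \bigl((\bX^{(n)})^H \bX^{(n)}\bigr)^{-1}(\bX^{(n)})^H$. Applying the same argument with the roles of $\tensorX$ and $\tensorG$ swapped gives $\|\tensorG\|_p \le \prod_{n=1}^N \|\bY^{(n)}\|_2 \cdot \|\tensorX\|_p$. From the SVD computation already carried out in the proof of Theorem~\ref{Thm:FrobeniousNormInvariance}, $\|\bY^{(n)}\|_2 = 1/\sigma_{\min}(\bX^{(n)})$, so $\prod_n \|\bY^{(n)}\|_2 = 1/\alpha$, yielding $\alpha\|\tensorG\|_p \le \|\tensorX\|_p$.

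I do not anticipate any serious obstacle: the argument is essentially the Frobenius proof translated to the CP-style definition of $\|\cdot\|_p$. The only subtlety to watch is that the infimum defining $\|\cdot\|_p$ need not be attained, so the argument must be phrased in terms of inequalities between sums $\sum_s|\lambda_s|^p$ and the infimum, rather than between attained decompositions; this is routine. The full-column-rank hypothesis on each $\bX^{(n)}$ is what guarantees $c_s^{(n)}>0$ and thus that the renormalization step is legitimate, and it is also what makes $\bY^{(n)}$ well defined via Lemma~\ref{lemma:IndependentTuckerDecompositionSingle}.
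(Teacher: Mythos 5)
Your proposal is correct and follows essentially the same route as the paper: push a near-optimal unit-norm rank-1 decomposition of $\tensorG$ through the factor matrices via Lemma~\ref{Lem:TensorProdMatrProd}, renormalize and bound the coefficients by $\prod_n\|\bX^{(n)}\|_2$, then obtain the lower bound by reversing roles with $\bY^{(n)}=\bigl((\bX^{(n)})^H\bX^{(n)}\bigr)^{-1}(\bX^{(n)})^H$ and the identity $\|\bY^{(n)}\|_2=1/\sigma_{\min}(\bX^{(n)})$. Your observation that full column rank forces every $\bX^{(n)}\bx_s^{(n)}\neq\mathbf{0}$ is in fact a slightly cleaner handling of the renormalization step than the paper's explicit exclusion of an index set of vanishing terms, and taking the infimum directly is equivalent to the paper's $\epsilon$-approximation device.
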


\noindent {\bf Proof:}
{For any $\epsilon>0$ find CP decomposition of tensor $\tensorG=\sum_{s=1}^r{\lambda_s \bg_s^{(1)}\circ \bg_s^{(2)}\circ \dots \bg_s^{(N)}}$ with $\|\bg_s^{(n)}\| = 1 $ satisfying  $\|\tensorG\|_p \ge  \left(\sum_{s=1}^r |\lambda_s|^p\right)^{1/p} - \epsilon$, denote the index set $S = \{ 1 \le s \le r \; | \; \bX^{(i)}\bg_s^{(i)}=0 \;\mbox{for some}\;i,\; 1\le i \le N \}$. Then from Lemma~\ref{Lem:TensorProdMatrProd}, we have
\begin{eqnarray*}
\tensorX&=& 
\sum_{s=1}^r{\lambda_s \bg_s^{(1)}\circ \bg_s^{(2)}\circ \dots \bg_s^{(N)}}\times_1 \bX^{(1)}\times_2 \bX^{(2)} \times_3 \cdots  \times_N \bX^{(N)} \\
&=&\sum_{s=1}^r{\lambda_s (\bX^{(1)}\bg_s^{(1)})\circ (\bX^{(2)}\bg_s^{(2)})\circ \dots \circ(\bX^{(N)}\bg_s^{(N)})}\\
&=&\sum_{s \in S}{\lambda_s (\bX^{(1)}\bg_s^{(1)})\circ (\bX^{(2)}\bg_s^{(2)})\circ \dots \circ(\bX^{(N)}\bg_s^{(N)})}\\
&=& \sum_{s \in S}{\prod_{n=1}^N \|\bX^{(n)}\bg_s^{(n)}\|}\cdot \lambda_s\left(\frac{\bX^{(1)}\bg_s^{(1)}}{\|\bX^{(1)}\bg_s^{(1)}\|}\right)\circ  \dots \circ \left(\frac{\bX^{(N)}\bg_s^{(N)}}{\|\bX^{(N)}\bg_s^{(N)}\|}\right),
\end{eqnarray*}
which is a valid CP decomposition of $\tensorX$.
Moreover, by the definition of tensor $p$-quasi norm,
\begin{eqnarray*}
\|\tensorX \|_p &\le&
\left(\sum_{s \in S} {\prod_{n=1}^N \|\bX^{(n)}\bg_s^{(n)}\|^p}\cdot  |\lambda_s|^p\right)^{1/p}
\le  
\left(\sum_{s\in S} \prod_{n=1}^N (\|\bX^{(n)}\|_2^p\|\bg_s^{(n)}\|^p)\cdot|\lambda_s|^p
\right)^{1/p} \\
&=&
\left(\sum_{s=1}^r \prod_{n=1}^N (\|\bX^{(n)}\|_2^p\|\bg_s^{(n)}\|^p)\cdot|\lambda_s|^p
\right)^{1/p}
 =  \prod_{n=1}^N \|\bX^{(n)}\|_2\left(\sum_{s=1}^r |\lambda_s|^p\right)^{1/p} = \prod_{n=1}^N \|\bX^{(n)}\|_2 \cdot (\|\tensorG \|_p + \epsilon),
\end{eqnarray*}
where the second equality is due to $\|\bg_s^{(n)}\| = 1$ for all $n$ and $s$.
Since $\epsilon>0$ can by chosen arbitrarily, we have $ \|{\cal X}\|_p \le \prod_{n=1}^N \|\bX^{(n)}\|_2 \cdot \|\tensorG \|_p $.
On the other hand, due to the nature of independent Tucker decomposition and Lemma~\ref{lemma:IndependentTuckerDecompositionSingle}, one has that
$\tensorG=\llbracket \tensorX; \bY^{(1)}, \ldots, \bY^{(N)} \rrbracket$. By repeating above argument, we have $\|\tensorG \|_p \le \prod_{n=1}^N \|\bY^{(n)}\|_2 \cdot \|\tensorX \|_p$.
The rest of the proof follows similarly from that of Theorem~\ref{Thm:FrobeniousNormInvariance}.
}
\hfill $\square$

Similar to the analysis in the previous subsection, when the Tucker decomposition is orthonormal then the $p$-quasi norm of a tensor and that of its core are equal.
\begin{corollary}
For an orthonormal Tucker decomposition, we have $\|\tensorX\|_p=\|\tensorG\|_p$.
\end{corollary}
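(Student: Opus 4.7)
The plan is to deduce this corollary directly from the preceding theorem by inspecting the constants $\alpha$ and $\beta$ when each factor matrix $\bX^{(n)}$ has orthonormal columns. Specifically, the theorem gives the sandwich inequality $\alpha\|\tensorG\|_p \le \|\tensorX\|_p \le \beta\|\tensorG\|_p$ with $\beta=\prod_{n=1}^N\|\bX^{(n)}\|_2$ and $\alpha=\beta/\prod_{n=1}^N\kappa(\bX^{(n)})$, so it suffices to verify that both $\alpha$ and $\beta$ collapse to $1$ in the orthonormal case.

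First, I would observe that if $\bX^{(n)}$ has orthonormal columns then $(\bX^{(n)})^H \bX^{(n)} = \bI$, whence every singular value of $\bX^{(n)}$ equals one. Consequently $\|\bX^{(n)}\|_2 = \sigma_{\max}(\bX^{(n)}) = 1$ and $\sigma_{\min}(\bX^{(n)}) = 1$, which implies the condition number $\kappa(\bX^{(n)}) = \sigma_{\max}(\bX^{(n)})/\sigma_{\min}(\bX^{(n)}) = 1$ for each $n$. Substituting these into the expressions for $\beta$ and $\alpha$ yields $\beta = \prod_{n=1}^N 1 = 1$ and $\alpha = 1/\prod_{n=1}^N 1 = 1$.

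Finally, I would plug these values back into the sandwich inequality from the theorem: the upper and lower bounds both become $\|\tensorG\|_p$, forcing $\|\tensorX\|_p = \|\tensorG\|_p$. There is no real obstacle here — the argument is a one-line specialization of the previous theorem — so the main thing to be careful about is simply matching the constants correctly and noting that the same reasoning applies uniformly for every $0 < p \le 1$ (the quasi-norm range in which the preceding theorem was stated).
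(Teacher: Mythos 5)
Your proof is correct and matches the paper's (implicit) argument exactly: the paper likewise specializes the sandwich inequality by noting that orthonormal columns force $\|\bX^{(n)}\|_2=1$ and $\kappa(\bX^{(n)})=1$, so $\alpha=\beta=1$. No issues.
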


\subsection{The border rank}
Although the CP rank is a natural extension of matrix rank, one undesirable theoretical property of this definition is that the best rank-$r$
approximation may not even exist (see~\cite{TDaA} for more details). In particular, a rank-$r$ tensor may be approximated arbitrarily close by a sequence of tensors whose CP ranks are strictly less than $r$.
To get around this,  
Bini~\cite{Bini2007} proposes the concept of the border rank,
which is defined as follows.

\begin{definition}   \label{Def:BorderRank}
For an $N$-way tensor $\tensorX \in \complN N$, its {\df border rank}, denoted by $\rank_B(\tensorX)$, is defined as
\begin{equation*}   \label{Eq:DefBorderRank}
\rank_B = \min \left\{ r\;|~ \forall \epsilon >0, \exists\; \tensorE \in \complN N, \text{ s.t. } \|\tensorE \|_F \leq \epsilon \text{ and } \rank_{CP}(\tensorX +\tensorE) \leq r \right\}.
\end{equation*}
\end{definition}
In other words, the border rank of a given tensor is
the minimum {CP-rank of tensors that can be found in any neighborhood of the given tensor}.
By the bound on the tensor norms, we now establish the equality of the border rank between a given tensor and its core.

\begin{theorem}   \label{Thm:BRankInvariance}
For any given tensor $\tensorX\in\complN N$ with independent Tucker decomposition
$
\tensorX = 
\llbracket \tensorG; \bX^{(1)}, \ldots, \bX^{(N)} \rrbracket,
$
we have $\rank_{B}(\tensorX)=\rank_{B}(\tensorG)$.
\end{theorem}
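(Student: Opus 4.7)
\textbf{Proof proposal for Theorem \ref{Thm:BRankInvariance}.}

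The plan is to prove both inequalities $\rank_B(\tensorX) \le \rank_B(\tensorG)$ and $\rank_B(\tensorG) \le \rank_B(\tensorX)$ by transferring approximating sequences across the independent Tucker decomposition, using Lemma \ref{Lem:TensorProdMatrProd} (or the argument in the proof of Theorem \ref{Thm:CPRankInvariance}) to control the CP rank and Theorem \ref{Thm:FrobeniousNormInvariance} to control the Frobenius error. By Lemma \ref{lemma:IndependentTuckerDecompositionSingle}, we also have the ``reverse'' representation $\tensorG = \llbracket \tensorX; \bY^{(1)}, \ldots, \bY^{(N)}\rrbracket$ for suitable matrices $\bY^{(n)}$, so both directions are completely symmetric and it suffices to argue one direction in detail.

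\textbf{First direction: $\rank_B(\tensorX) \le \rank_B(\tensorG)$.} Let $r = \rank_B(\tensorG)$ and fix any $\epsilon > 0$. Set $\beta = \|\bX^{(1)}\|_2 \cdots \|\bX^{(N)}\|_2$ (finite since each $\bX^{(n)}$ has full column rank, hence is a bounded linear operator), and choose $\tensorE_G \in \complN{N}$ (replacing $\complN{N}$ with the appropriate $J_n$ dimensions) satisfying $\|\tensorE_G\|_F \le \epsilon/\beta$ and $\rank_{CP}(\tensorG + \tensorE_G) \le r$. Now define
\[
\tensorE_X \triangleq \llbracket \tensorE_G; \bX^{(1)}, \ldots, \bX^{(N)}\rrbracket.
\]
Two things must be verified. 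First, by Lemma \ref{Lem:FrobeniusMatrixProduct} applied iteratively across the $N$ mode-$n$ products, $\|\tensorE_X\|_F \le \beta \|\tensorE_G\|_F \le \epsilon$. Second, by the argument used in the proof of Theorem \ref{Thm:CPRankInvariance} (applying Lemma \ref{Lem:TensorProdMatrProd} to a CP decomposition of $\tensorG + \tensorE_G$ of length $r$), we obtain
\[
\tensorX + \tensorE_X = \llbracket \tensorG + \tensorE_G; \bX^{(1)}, \ldots, \bX^{(N)}\rrbracket
\]
has CP rank at most $r$. Since $\epsilon$ was arbitrary, Definition \ref{Def:BorderRank} gives $\rank_B(\tensorX) \le r$.

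\textbf{Second direction: $\rank_B(\tensorG) \le \rank_B(\tensorX)$.} By Lemma \ref{lemma:IndependentTuckerDecompositionSingle}, there exist full row-rank matrices $\bY^{(n)} = \bigl((\bX^{(n)})^H \bX^{(n)}\bigr)^{-1}(\bX^{(n)})^H$ with $\tensorG = \llbracket \tensorX; \bY^{(1)}, \ldots, \bY^{(N)}\rrbracket$. A priori this is not an ``independent Tucker decomposition'' in the sense of Definition \ref{Def:TensorDecomp} (the $\bY^{(n)}$ are wide, not tall), but the argument of the first direction used only two facts about the factor matrices: that $\llbracket \cdot ; \bY^{(1)},\ldots,\bY^{(N)}\rrbracket$ does not increase CP rank (Lemma \ref{Lem:TensorProdMatrProd}), and that it scales Frobenius norm by at most $\prod_n \|\bY^{(n)}\|_2$ (Lemma \ref{Lem:FrobeniusMatrixProduct}). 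Both hold for arbitrary compatible matrices. Repeating the construction with the roles of $\tensorX$ and $\tensorG$ swapped and with $\bX^{(n)}$ replaced by $\bY^{(n)}$ yields $\rank_B(\tensorG) \le \rank_B(\tensorX)$, completing the proof.

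\textbf{Anticipated obstacle.} The proof is mostly bookkeeping once one notices that all the machinery (CP-rank non-increase under mode products, Frobenius-norm control, existence of the ``inverse'' representation via Lemma \ref{lemma:IndependentTuckerDecompositionSingle}) is already in place. The only subtlety worth flagging is to ensure that the approximating error $\tensorE_X$ really can be made arbitrarily small and not just bounded by some fixed multiple of $\epsilon$: this is resolved by choosing the tolerance on the $\tensorE_G$ side to be $\epsilon/\beta$ rather than $\epsilon$, since $\beta$ is a fixed constant determined by the factor matrices.
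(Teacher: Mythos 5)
Your proposal is correct and follows essentially the same route as the paper's proof: choose the perturbation of the core with tolerance scaled down by $\beta=\prod_n\|\bX^{(n)}\|_2$, push it through the factor matrices to perturb $\tensorX$, control the CP rank via Lemma \ref{Lem:TensorProdMatrProd} and the Frobenius error via Lemma \ref{Lem:FrobeniusMatrixProduct}, and then reverse the roles using the representation $\tensorG=\llbracket \tensorX;\bY^{(1)},\ldots,\bY^{(N)}\rrbracket$ from Lemma \ref{lemma:IndependentTuckerDecompositionSingle}. Your explicit remark that the reverse direction only needs the two one-sided estimates (valid for arbitrary compatible matrices, even though the $\bY^{(n)}$ are wide) is a small point of care that the paper leaves implicit.
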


\noindent {\bf Proof: }
Assume $\rank_{B}(\tensorG) = r$, we want to show $\rank_{B}(\tensorX)\le \rank_{B}(\tensorG)=r$. By definition of the border rank, for any $\epsilon >0$, there exists
{ $\tensorE \in  \mathbb{C}^{J_1\times J_2\times \cdots\times J_N}$} such that $\| \tensorE \|_F \leq \epsilon / \prod_{n=1}^N{\sigma_{\max} (\bX^{(n)})}$
and $\rank_{CP}(\tensorG +\tensorE) \leq r $.
Now, construct
$$
\tensorT =\tensorE \times_1 \bX^{(1)} \times_2 \cdots \times_N  \bX^{(N)},
$$
and consider the tensor $\tensorX + \tensorT = (\tensorG + \tensorE) \times_1 \bX^{(1)} \times_2 \cdots \times_N  \bX^{(N)}$. Obviously, by Theorem \ref{Thm:CPRankInvariance} we have
\begin{equation}\label{border-rank-formula1}
{\rank_{CP}(\tensorG +\tensorE) = \rank_{CP}(\tensorX +\tensorT) \leq r.}
\end{equation}
Moreover, according to Theorem \ref{Thm:FrobeniousNormInvariance} we also have
\begin{equation}\label{border-rank-formula2}\| \tensorT\|_F=\prod_{n=1}^N{\sigma_{\max} (\bX^{(n)})} \cdot \| \tensorE\|_F \leq \epsilon.
\end{equation}
Combining~\eqref{border-rank-formula1} and \eqref{border-rank-formula2} implies that $\rank_{B}(\tensorX)\le \rank_{B}(\tensorG)$.

Now we recall that for independent Tucker decomposition,
$
\tensorG = 
\llbracket \tensorX; \bY^{(1)}, \ldots, \bY^{(N)} \rrbracket,
$
with $\bY^{(n)} = \big{(}(\bX^{(n)})^H\bX^{(n)}\big{)}^{-1}(\bX^{(n)})^H $ for all $n$ (see Lemma~\ref{lemma:IndependentTuckerDecompositionSingle} for more details).
The inequality
$\rank_{B}(\tensorX)\ge \rank_{B}(\tensorG)$ follows similarly from the argument above, which
establishes the desired equality.
\hfill $\square$

\section{Invariance of tensor eigenvalues}\label{sec:eigenvalue}
In this section, we focus 
on the real-field and investigate the invariance properties of various notions of tensor eigenvalues.

\subsection{Invariance of the Z-eigenvalues}
Let us consider the class of real-valued symmetric tensors, and denote the symmetric rank-one tensor $\underbrace{\bx \circ \cdots \circ \bx}_{N}$ as $\bx^{\circ N}$.
For any symmetric tensor $\tensorT \in  \real{I \times \cdots \times I}$ and $(N-1)$-way rank-$1$ tensor $\bx^{\circ (N-1)}$, $\tensorT(\bx^{\circ (N-1)})$ denotes an $I$ dimensional vector such that
\begin{equation*}
(\tensorT(\bx^{\circ (N-1)}))_{i_N} \triangleq \sum_{i_1, i_2, \dots, i_{N-1}=1}^I {T_{i_1i_2\cdots i_{N-1}i_N}\cdot x_{i_1} x_{i_2} \cdots  x_{i_{N-1}}}.
\end{equation*}
With this notion in place, the Z-eigenvalue and Z-eigenvector of a tensor are defined as follows.
\begin{definition}\label{Def:Eigen}
For an $N$-way symmetric tensor $\tensorT \in \real{I \times \cdots \times I}$, if there exists a number $\lambda \in \real{}$ and a nonzero vector $\bx \in \real I$ such that
\begin{equation}
\tensorT(\bx^{\circ (N-1)})=\lambda \bx, ~\bx^{\mathrm T}\bx=1.
\end{equation}
Then $\lambda$ is called the {\df Z-eigenvalue} of $\tensorT$, and $\bx$ is called the corresponding {\df Z-eigenvector}.
\end{definition}
The Z-eigenvalues were first studied by Qi~\cite{Qi2005} and Lim~\cite{SVaEoTAVA} independently.
The relationship between the Z-eigenvalues of a symmetric tensor and that of its core is described as follows.

\begin{theorem}   \label{thm:TuckerZEigenConst}
For any given $N$-way symmetric tensor $\tensorT \in \real{I\times I\times \cdots \times I}$ with exact independent symmetric Tucker decomposition
$\tensorT = \llbracket \tensorG^s; \bX, \bX, \ldots, \bX\rrbracket$, construct
$$
\hat \tensorG^s = \tensorG^s \times_1 (\bX^{\mathrm T}\bX)^{1/2}\times_2 \cdots \times_N (\bX^{\mathrm T}\bX)^{1/2}.
$$
Then any Z-eigenvalues of $\hat \tensorG^s$ are also Z-eigenvalues of $\tensorT$ while any non-zero Z-eigenvalue of $\tensorT$ are also Z-eigenvalues of $\hat \tensorG^s$.
\end{theorem}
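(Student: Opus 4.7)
The plan is to translate the Z-eigenvalue equation through the Tucker factor $\bX$ and the auxiliary square root $\bM^{1/2}$, where $\bM := \bX^{\mathrm T}\bX$, so as to exhibit an explicit correspondence (modulo a zero-eigenvalue caveat) between Z-eigenpairs of $\tensorT$ and of $\hat\tensorG^s$. Because $\bX$ has full column rank, $\bM$ is symmetric positive definite and its real symmetric square root $\bM^{1/2}$ is invertible. Note also that $\hat\tensorG^s$ is symmetric by Theorem~\ref{thm:SymmetricTuckerConst} applied to $\tensorG^s$ with the symmetric factor $\bM^{1/2}$, so its Z-eigenvalues are well-defined.

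The first step is to derive two contraction identities. Unrolling $\tensorT = \llbracket\tensorG^s;\bX,\ldots,\bX\rrbracket$ elementwise and collecting the last Tucker factor yields
\begin{equation*}
\tensorT(\bx^{\circ(N-1)}) \;=\; \bX\,\tensorG^s\bigl((\bX^{\mathrm T}\bx)^{\circ(N-1)}\bigr);
\end{equation*}
the same computation applied to $\hat\tensorG^s = \llbracket\tensorG^s;\bM^{1/2},\ldots,\bM^{1/2}\rrbracket$ (using $(\bM^{1/2})^{\mathrm T}=\bM^{1/2}$) gives
\begin{equation*}
\hat\tensorG^s(\bz^{\circ(N-1)}) \;=\; \bM^{1/2}\,\tensorG^s\bigl((\bM^{1/2}\bz)^{\circ(N-1)}\bigr).
\end{equation*}

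For the forward direction, let $(\mu,\bz)$ be a Z-eigenpair of $\hat\tensorG^s$ and define $\bx := \bX\bM^{-1/2}\bz$. A short computation gives $\|\bx\|^2 = \bz^{\mathrm T}\bM^{-1/2}\bM\bM^{-1/2}\bz = 1$ and $\bX^{\mathrm T}\bx = \bM^{1/2}\bz$. From $\hat\tensorG^s(\bz^{\circ(N-1)}) = \mu\bz$ and the second identity, invertibility of $\bM^{1/2}$ yields $\tensorG^s((\bM^{1/2}\bz)^{\circ(N-1)}) = \mu\bM^{-1/2}\bz$; substitution into the first identity gives $\tensorT(\bx^{\circ(N-1)}) = \mu\bx$. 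For the converse with $\lambda\neq 0$, the first identity forces $\bx = \lambda^{-1}\bX\tensorG^s((\bX^{\mathrm T}\bx)^{\circ(N-1)})$ to lie in the column space of $\bX$; by full column rank there is a unique $\bv$ with $\bx=\bX\bv$, and setting $\bz := \bM^{1/2}\bv$ gives $\|\bz\|^2 = \bv^{\mathrm T}\bM\bv = \|\bX\bv\|^2 = 1$ and $\bX^{\mathrm T}\bx = \bM^{1/2}\bz$. Cancelling $\bX$ on the left in the eigenvalue equation (legitimate by full column rank) produces $\tensorG^s((\bM^{1/2}\bz)^{\circ(N-1)}) = \lambda\bM^{-1/2}\bz$, and applying $\bM^{1/2}$ yields $\hat\tensorG^s(\bz^{\circ(N-1)}) = \lambda\bz$.

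The principal subtlety, and the reason for the non-zero hypothesis in the converse, is precisely that a zero Z-eigenvector of $\tensorT$ need not lie in the range of $\bX$: it may have a component in the orthogonal complement, in which case no $\bv$ with $\bx=\bX\bv$ exists and the pullback construction breaks down. All other steps reduce to linear-algebraic bookkeeping once the two contraction identities and the positive definiteness of $\bM$ are in hand.
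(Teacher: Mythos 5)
Your proof is correct and follows essentially the same route as the paper: both arguments conjugate the Z-eigenvalue equation through the isometry $\bX(\bX^{\mathrm T}\bX)^{-1/2}$, use the identical eigenvector correspondence in each direction, and rely on Theorem~\ref{thm:SymmetricTuckerConst} for the symmetry of $\hat \tensorG^s$. The only divergence is how the unit-norm condition of the pulled-back eigenvector is verified in the converse: you note that $\lambda\neq 0$ forces the eigenvector into the column space of $\bX$ so that normalization is immediate, whereas the paper performs one extra contraction to obtain $\mu\,\bb^{\mathrm T}\bX(\bX^{\mathrm T}\bX)^{-1}\bX^{\mathrm T}\bb=\mu$; your variant makes the role of the non-zero hypothesis slightly more transparent, but the underlying mechanism is the same.
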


\noindent {\bf Proof:}
Since the Tucker decomposition is independent, by Lemma~\ref{lemma:IndependentTuckerDecompositionSingle},
$$
\tensorG^s = \tensorT \times_1 \bY \times_2 \cdots \times_{N} \bY,
$$
with $\bY = (\bX^{\mathrm T}\bX)^{-1}\bX^{\mathrm T}$.
Furthermore, due to Theorem~\ref{thm:SymmetricTuckerConst}, $\tensorG^s$ is symmetric and so is $\hat \tensorG^s$.
Let $\lambda$ be a Z-eigenvalue of $\hat \tensorG^s$ with Z-eigenvector $\ba$. We have
\begin{eqnarray*}
\lambda \ba &=& \hat \tensorG^s(\ba^{\circ (N-1)}) =
\hat \tensorG^s \times_1 \ba^\mathrm{T} \times_2 \cdots \times_{N-1} \ba^\mathrm{T}
\\&=&
\tensorG^s \times_1 (\bX^{\mathrm T}\bX)^{1/2}\times_2 \cdots \times_N (\bX^{\mathrm T}\bX)^{1/2}  \times_1 \ba^\mathrm{T} \times_2 \cdots \times_{N-1} \ba^\mathrm{T}\\
&=&
\tensorT \times_1 \bY \times_2 \cdots \times_{N} \bY \times_1 \ba^\mathrm{T}(\bX^{\mathrm T}\bX)^{1/2}\times_2 \cdots \times_{N-1} \ba^\mathrm{T}(\bX^{\mathrm T}\bX)^{1/2} \times_{N} (\bX^{\mathrm T}\bX)^{1/2}
\\&=&
\tensorT \times_1 \ba^\mathrm{T}(\bX^{\mathrm T}\bX)^{-1/2}\bX^\mathrm{T} \times_2 \cdots \times_{N-1} \ba^\mathrm{T}(\bX^{\mathrm T}\bX)^{-1/2}\bX^\mathrm{T}  \times_{N} (\bX^{\mathrm T}\bX)^{-1/2}\bX^\mathrm{T}.
\end{eqnarray*}
Applying $\times_N \bX (\bX^{\mathrm T}\bX)^{-1/2}$ to both sides and invoking Lemma \ref{lemma:IndependentTuckerDecompositionSingle} yield
\begin{eqnarray*}
\tensorT((\bX (\bX^{\mathrm T}\bX)^{-1/2}\ba)^{\circ (N-1)})&=&\tensorT \times_1\ba^\mathrm{T}(\bX^{\mathrm T}\bX)^{-1/2}\bX^{\mathrm T} \times_2 \cdots \times_{N-1}\ba^\mathrm{T}(\bX^{\mathrm T}\bX)^{-1/2}\bX^{\mathrm T}\\
&=&\lambda \ba \times_N \bX (\bX^{\mathrm T}\bX)^{-1/2} = \lambda  \bX (\bX^{\mathrm T}\bX)^{-1/2} \ba.
\end{eqnarray*}
Furthermore, we have $\ba^{\rm T}(\bX^{\mathrm T}\bX)^{-1/2}\bX^{\mathrm T}\bX (\bX^{\mathrm T}\bX)^{-1/2} \ba=\ba^{\rm T}\ba=1$, thus $\lambda$ is an eigenvalue of $\tensorT$ with corresponding eigenvector $\bX (\bX^{\mathrm T}\bX)^{-1/2}  \ba$.

On the other hand, suppose $\mu$ is a Z-eigenvalue of $\tensorT$ associated with the Z-eigenvector $\bb$. Following a similar argument, one has
\begin{eqnarray*}\label{equ:Z-eigenvalueProof}
&&\hat \tensorG^s\left(\big{(}(\bX^{\mathrm T}\bX)^{-1/2} \bX^{\rm T}\bb\big{)}^{\circ (N-1)}\right)\\
&=&\hat \tensorG^s \times_1(\bb^{\rm T}\bX (\bX^{\mathrm T}\bX)^{-1/2})\times_2 \cdots \times_{N-1}(\bb^{\rm T}\bX (\bX^{\mathrm T}\bX)^{-1/2})\\
&=&\mu (\bX^{\mathrm T}\bX)^{-1/2} \bX^{\rm T} \bb.
\end{eqnarray*}
By applying $\times_N \bb^{\rm T}\bX (\bX^{\mathrm T}\bX)^{-1/2}$ to both sides of the above equality, we have
\begin{equation*}
\hat \tensorG^s \times_1(\bb^{\rm T}\bX (\bX^{\mathrm T}\bX)^{-1/2})\times_2 \cdots \times_{N}(\bb^{\rm T}\bX (\bX^{\mathrm T}\bX)^{-1/2})
= \mu \bb^{\rm T}\bX (\bX^{\mathrm T}\bX)^{-1} \bX^{\rm T} \bb.
\end{equation*}
Moreover, it is easy to verify that
\begin{eqnarray*}&&\hat \tensorG^s \times_1(\bb^{\rm T}\bX (\bX^{\mathrm T}\bX)^{-1/2})\times_2 \cdots \times_{N}(\bb^{\rm T}\bX (\bX^{\mathrm T}\bX)^{-1/2})\\
&=& \tensorG^s \times_1 \bb^{\rm T}\bX  \times_2 \cdots \times_N \bb^{\rm T}\bX \\
&=&\tensorT \times_1 \bY \times_2 \cdots \times_{N} \bY \times_1\bb^{\rm T}\bX\times_2\dots \times_N\bb^{\rm T}\bX\\
&=&\tensorT \times_1 \bb^{\rm T} \times_2 \cdots \times_{N} \bb^{\rm T} = \mu \bb\times_N\bb^{\rm T}=\mu,
\end{eqnarray*}
where the third equality is due to Lemma \ref{lemma:IndependentTuckerDecompositionSingle}.
If $\mu\neq0$, then $\bb^{\rm T}\bX (\bX^{\mathrm T}\bX)^{-1} \bX^{\rm T} \bb=1$, meaning that $\mu$ is the Z-eigenvalue of core tensor $\tensorG^s$ with the associated Z-eigenvector $(\bX^{\mathrm T}\bX)^{-1/2} \bX^{\rm T} \bb$.
\hfill $\square$


We remark that if the Tucker decomposition is orthonormal then $\hat \tensorG^s = \tensorG^s$, and the above theorem states that
all the Z-eigenvalues (except zero) of a symmetric tensor equal to the Z-eigenvalues of its core. This motivates us to focus on the eigenvalues of the core tensor, which however may miss a zero eigenvalue. Fortunately, the following result tells us that when the size of the core is strictly less than the size of the original tensor, then zero eigenvalue is always present.

\begin{proposition}\label{prop:zero-Z-eig}
Suppose a given $N$-way symmetric tensor $\tensorT \in \real{I\times I\times \cdots \times I}$ has an exact independent symmetric Tucker decomposition
$\tensorT = \llbracket \tensorG^s; \bX, \bX, \ldots, \bX\rrbracket$ such that $\tensorG^s \in \real{J\times J\times \cdots \times J}$.
If $I>J$, then $0$ is an Z-eigenvalue.
\end{proposition}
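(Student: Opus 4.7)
The plan is to exhibit an explicit unit vector that serves as a Z-eigenvector with eigenvalue $0$. The key observation is that when $I>J$ the factor matrix $\bX \in \real{I\times J}$, being a tall matrix, has a column space of dimension at most $J<I$, so its orthogonal complement in $\real{I}$ is non-trivial. This provides room for vectors that are ``annihilated'' by $\bX^{\mathrm T}$.

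Concretely, I would first pick any nonzero $\bx \in \real{I}$ with $\bX^{\mathrm T}\bx = \bzero$, which exists because $\dim (\mathrm{range}(\bX))^\perp \geq I - J \geq 1$. After normalization we may assume $\bx^{\mathrm T}\bx=1$, as required by Definition~\ref{Def:Eigen}. Then I would compute $\tensorT(\bx^{\circ(N-1)})$ by writing out the Tucker form
\begin{equation*}
\tensorT(\bx^{\circ(N-1)}) = \tensorT \times_1 \bx^{\mathrm T} \times_2 \cdots \times_{N-1} \bx^{\mathrm T}
= \tensorG^s \times_1 (\bx^{\mathrm T}\bX) \times_2 \cdots \times_{N-1} (\bx^{\mathrm T}\bX) \times_N \bX,
\end{equation*}
where the last equality uses Proposition~\ref{Prp:NModeProd} to commute the mode-$n$ products and combine them in each mode. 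Since $\bx^{\mathrm T}\bX = (\bX^{\mathrm T}\bx)^{\mathrm T} = \bzero^{\mathrm T}$, the contraction in mode $1$ (say) kills the entire expression, yielding $\tensorT(\bx^{\circ(N-1)}) = \bzero = 0 \cdot \bx$. This exhibits $\lambda = 0$ as a Z-eigenvalue of $\tensorT$ with Z-eigenvector $\bx$.

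There is essentially no obstacle here beyond the bookkeeping of mode-$n$ products; the dimension count $I>J$ is exactly what guarantees a nonzero vector in $\ker \bX^{\mathrm T}$, and the symmetric Tucker form does the rest automatically. Note also that, in contrast to Theorem~\ref{thm:TuckerZEigenConst}, this zero eigenvalue is manufactured purely by the gap between $I$ and $J$ and need not correspond to any eigenvalue of the smaller core $\hat{\tensorG}^s$, which is why the proposition has to be stated separately.
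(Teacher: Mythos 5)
Your proposal is correct and follows essentially the same route as the paper: take a nonzero $\ba$ in the kernel of $\bX^{\mathrm T}$ (which exists since $J<I$), normalize it, and observe that contracting $\tensorT=\llbracket \tensorG^s;\bX,\ldots,\bX\rrbracket$ against it in $N-1$ modes annihilates the expression, so $0$ is a Z-eigenvalue. You even supply the normalization step that the paper leaves implicit.
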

\noindent {\bf Proof:}
We note the the factor matrix $\bX\in \real{I \times J}$ in the independent Tucker decomposition.
Since $J<I$, there exists a non-zero vector $\ba$ such that $\bX^{\rm T}\ba=0$. Thus we have
$$
\tensorT \times_1\ba^{\rm T}\dots \times_{N-1}\ba^{\rm T}=\tensorG^s\times_1 (\ba^{\rm T}\bX )\dots\times_{N-1} (\ba^{\rm T}\bX ) \times_N\bX=0,
$$
implying that $\ba$ is a Z-eigenvector corresponding to the Z-eigenvalue $0$ of the tensor $\tensorT$.
\hfill $\square$

Invariance of the Z-eigenvalues has interesting implications regarding the nonnegativity properties as well. A commonly used notion of nonnegativity
is the {\it positive semidefinite
(PSD)} tensor:
$$
\tensorT (\bx^{\circ (2N)}) \ge 0,\;\forall\; \bx \in \real{I},
$$
where $\tensorT $ is symmetric and has degree $2N$. Since all the Z-eigenvalues of $\tensorT$ correspond to all the KKT points of the polynomial optimization:
$\min_{\|\bx\|_2 =1}\tensorT (\bx^{\circ (2N)})$, we have the following result as a consequence of Theorem~\ref{thm:TuckerZEigenConst}:
\begin{corollary}
For any given $2N$-way symmetric tensor $\tensorT \in \real{I\times I\times \cdots \times I}$ with exact independent symmetric Tucker decomposition
$\tensorT = \llbracket \tensorG^s; \bX, \bX, \ldots, \bX\rrbracket$, $\tensorT$ is PSD if and only if $\tensorG^s$ is PSD.
\end{corollary}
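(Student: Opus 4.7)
The plan is to prove the equivalence directly from the structure of the independent symmetric Tucker decomposition, rather than routing through the Z-eigenvalue invariance, although the latter also works. The whole argument hinges on a single key identity, after which the forward and backward implications are one-line consequences of, respectively, the definition of PSD and the full-column-rank hypothesis on $\bX$.

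First, I would establish the key identity
\begin{equation*}
\tensorT(\bx^{\circ 2N}) \;=\; \tensorG^s\bigl((\bX^{\mathrm T}\bx)^{\circ 2N}\bigr), \qquad \forall\, \bx \in \real I.
\end{equation*}
To see this, I would write $\tensorT(\bx^{\circ 2N})$ as the scalar full-mode contraction $\tensorT \times_1 \bx^{\mathrm T} \times_2 \cdots \times_{2N} \bx^{\mathrm T}$, substitute $\tensorT = \tensorG^s \times_1 \bX \times_2 \cdots \times_{2N} \bX$, and then apply Proposition~\ref{Prp:NModeProd} to merge each same-mode pair $(\bX,\bx^{\mathrm T})$ into $\bx^{\mathrm T}\bX = (\bX^{\mathrm T}\bx)^{\mathrm T}$. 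Given this identity, the forward direction is immediate: if $\tensorG^s$ is PSD, then $\bX^{\mathrm T}\bx \in \real J$ for every $\bx \in \real I$, so $\tensorT(\bx^{\circ 2N}) = \tensorG^s((\bX^{\mathrm T}\bx)^{\circ 2N}) \ge 0$.

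For the converse, I would exploit the independence of the Tucker decomposition: because $\bX \in \real{I\times J}$ has full column rank~$J$, its transpose $\bX^{\mathrm T}\colon \real I \to \real J$ is surjective. Hence every $\by \in \real J$ admits a preimage $\bx \in \real I$, and the identity yields $\tensorG^s(\by^{\circ 2N}) = \tensorT(\bx^{\circ 2N}) \ge 0$, proving that $\tensorG^s$ is PSD.

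The main obstacle is really only careful bookkeeping in the key identity via Proposition~\ref{Prp:NModeProd}; everything else is trivial. For completeness, I would remark that an alternative route via Theorem~\ref{thm:TuckerZEigenConst} is also available: PSD of a real symmetric tensor of even degree is equivalent to nonnegativity of all its Z-eigenvalues (the minimum of $\tensorT(\bx^{\circ 2N})$ on the unit sphere is attained and is a KKT point, hence a Z-eigenvalue), Theorem~\ref{thm:TuckerZEigenConst} reduces PSD-ness of $\tensorT$ to that of $\hat\tensorG^s$, and $\hat\tensorG^s$ is PSD iff $\tensorG^s$ is PSD because $\hat\tensorG^s(\bz^{\circ 2N}) = \tensorG^s\bigl(((\bX^{\mathrm T}\bX)^{1/2}\bz)^{\circ 2N}\bigr)$ with $(\bX^{\mathrm T}\bX)^{1/2}$ an invertible change of variables on $\real J$; the possibly missing zero Z-eigenvalue flagged by Proposition~\ref{prop:zero-Z-eig} is harmless since $0 \ge 0$ trivially.
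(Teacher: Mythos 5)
Your proposal is correct, but your primary argument is genuinely different from the paper's. The paper obtains this corollary as a one-line consequence of Theorem~\ref{thm:TuckerZEigenConst}: since the Z-eigenvalues of $\tensorT$ are exactly the KKT points of $\min_{\|\bx\|=1}\tensorT(\bx^{\circ 2N})$, PSD-ness is equivalent to nonnegativity of the smallest Z-eigenvalue, and the eigenvalue invariance between $\tensorT$ and $\hat\tensorG^s$ transfers that property (this is precisely the alternative route you sketch at the end, including the correct observation that the possibly absent zero eigenvalue is harmless). Your main proof instead works directly from the contraction identity $\tensorT(\bx^{\circ 2N})=\tensorG^s\bigl((\bX^{\mathrm T}\bx)^{\circ 2N}\bigr)$, which follows from Proposition~\ref{Prp:NModeProd} by collapsing each pair $\bx^{\mathrm T}\bX$, together with surjectivity of $\bX^{\mathrm T}$ coming from the full-column-rank assumption. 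This is more elementary and self-contained: it bypasses attainment of the minimum on the sphere, the KKT characterization, and the bookkeeping about the zero eigenvalue, and it would work verbatim even if one had not developed the eigenvalue theory. What the paper's route buys is economy in context -- Theorem~\ref{thm:TuckerZEigenConst} has already been proved, so the corollary costs nothing extra -- and it simultaneously illustrates the utility of the eigenvalue invariance. Both arguments are valid; I would only suggest stating explicitly in your key identity that $\bx^{\mathrm T}\bX=(\bX^{\mathrm T}\bx)^{\mathrm T}$ is applied mode by mode for all $2N$ modes, which you do indicate.
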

The dual of the class of PSD tensors is the {\it sum of
powers (SOP)} tensors (see \cite{JiangLiZhang2015}):
$$\tensorT = \sum_{k=1}^{m} (\bx^k)^{\circ (2N)},\;\mbox{where}\;m\;\mbox{a positive integer and}\;\bx^k \in \real{I},\;\forall\;k=1,\cdots, m  .$$
Similarly, Theorem~\ref{Thm:CPRankInvariance} also implies:
\begin{corollary} For any given $2N$-way symmetric tensor $\tensorT \in \real{I\times I\times \cdots \times I}$ with exact independent symmetric Tucker decomposition
$\tensorT = \llbracket \tensorG^s; \bX, \bX, \ldots, \bX\rrbracket$, $\tensorT$ is SOP if and only if $\tensorG^s$ is SOP.
\end{corollary}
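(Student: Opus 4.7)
The plan is to adapt the proof of Theorem~\ref{Thm:CPRankInvariance} to the symmetric SOP setting, exploiting the fact that multiplying a symmetric rank-one tensor $\bv^{\circ (2N)}$ by the \emph{same} matrix in every mode yields $(\bX\bv)^{\circ (2N)}$, so the ``power of a single vector'' structure is preserved in both directions of the Tucker correspondence.

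For the $(\Leftarrow)$ direction, suppose $\tensorG^s$ is SOP with decomposition $\tensorG^s = \sum_{k=1}^{m} (\bg^k)^{\circ (2N)}$, where each $\bg^k \in \real{J}$. Iteratively applying Lemma~\ref{Lem:TensorProdMatrProd} to each of the $2N$ modes of the Tucker identity $\tensorT = \tensorG^s \times_1 \bX \times_2 \cdots \times_{2N} \bX$ transports each rank-one summand to $(\bX\bg^k)^{\circ (2N)}$, giving
\[
\tensorT \;=\; \sum_{k=1}^{m} (\bX\bg^k)^{\circ (2N)},
\]
with $\bX\bg^k \in \real{I}$. This is a valid SOP representation of $\tensorT$.

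For the $(\Rightarrow)$ direction, I would invoke the symmetric version of Lemma~\ref{lemma:IndependentTuckerDecompositionSingle} (the construction explicitly spelled out right after Definition~\ref{Def:SymmetricTucker}): from $\tensorT = \llbracket \tensorG^s; \bX,\ldots,\bX\rrbracket$ one recovers $\tensorG^s = \tensorT \times_1 \bB \times_2 \cdots \times_{2N} \bB$ with $\bB = (\bX^{\mathrm T}\bX)^{-1}\bX^{\mathrm T} \in \real{J\times I}$. If $\tensorT = \sum_{k=1}^{m} (\bx^k)^{\circ (2N)}$ is an SOP representation, then applying Lemma~\ref{Lem:TensorProdMatrProd} along all $2N$ modes yields
\[
\tensorG^s \;=\; \sum_{k=1}^{m} (\bB\bx^k)^{\circ (2N)},
\]
which exhibits $\tensorG^s$ as a sum of $2N$-th powers of real vectors $\bB\bx^k \in \real{J}$, hence SOP.

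There is no substantive obstacle: the whole argument is the symmetric counterpart of the CP-rank proof, together with the observation that the same-in-every-mode multiplication scheme preserves the form $\bv^{\circ(2N)}$ (rather than merely preserving the rank-one structure). The only mild point worth flagging is that one must work throughout over $\real{}$---so that $\bB\bx^k$ is guaranteed to be real and $(\bB\bx^k)^{\circ (2N)}$ is a legitimate SOP summand---which is automatic here since the hypothesis places $\tensorT$ and $\bX$ in the real field.
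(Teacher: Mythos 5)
Your proposal is correct and follows essentially the same route the paper intends: the paper derives this corollary directly from the machinery behind Theorem~\ref{Thm:CPRankInvariance} and Theorem~\ref{thm:SymmetricTuckerConst}, namely transporting each symmetric rank-one summand $(\bv)^{\circ(2N)}$ to $(\bX\bv)^{\circ(2N)}$ via Lemma~\ref{Lem:TensorProdMatrProd} in one direction and to $(\bB\bv)^{\circ(2N)}$ with $\bB=(\bX^{\mathrm T}\bX)^{-1}\bX^{\mathrm T}$ via Lemma~\ref{lemma:IndependentTuckerDecompositionSingle} in the other. Your remark about staying over the real field is the right (and only) point of care, and it is indeed automatic here.
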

We remark that verifying whether a tensor is PSD or SOP are in general NP-hard problems~\cite{HillarLim2013,JiangLiZhang2015}. A famous result of Hilbert states that a 4th order tertiary polynomial is PSD if and only if it is a sum of squares, where the latter condition can be verified easily. This implies that if the core of a 4th order symmetric tensor has size no more than 3, then one can easily verify if it is PSD or not.

\subsection{Invariance of the M-eigenvalue}

In this subsection, we consider tensors with a ``less'' symmetric structure, termed {\it partial symmetricity}. In particular,
for a four-way tensor $\tensorT\in\compl{N\times M\times N\times M}$, we call it
{\it partial symmetric} if
$$
{t_{ijkl}=t_{kjil}=t_{ilkj}=t_{klij}} \mbox{ , for }i,k=1, 2, \dots, N; j,l=1, 2, \dots, M.
$$
Similarly, a Tucker decomposition is called {\df partial symmetric} if it is of the form
 $$
 \tensorT=\llbracket \tensorG^{ps}; \bA, \bB, \bA, \bB \rrbracket.
 $$
Below we introduce the notion of M-eigenvalue and M-eigenvector proposed in Qi, Dai and Han in~\cite{Qi2009}.
\begin{definition}
For a four-way partial symmetric tensor $\tensorT \in \real{N\times M\times N\times M}$, if there exist two numbers $\lambda \mbox{ and } \mu \in \real{}$, two nonzero vectors $\bx \in \real N$ and $\by \in \real M$ such that
\begin{eqnarray*}
\tensorT (\cdot, \by, \bx, \by)=\lambda \bx,~\bx^{\mathrm T} \bx=1\\
\tensorT (\bx, \by, \bx, \cdot)=\mu \by,~ \by^{\mathrm T} \by=1
\end{eqnarray*}
where {$\tensorT (\cdot, \by, \bx, \by)=\sum_{k=1}^{N}\sum_{j,l=1}^{M}t_{ijkl}y_j x_k y_l$} and {$\tensorT (\bx, \by, \bx, \cdot)=\sum_{i, k=1}^{N}\sum_{j=1}^{M}t_{ijkl}x_i y_j x_k$}. Then $\lambda$ and $\mu$ are called the {\df M-eigenvalues} of $\tensorT$, while $\bx$ and $\by$ are called the corresponding {\df M-eigenvectors}.
\end{definition}

\begin{theorem}
For any four-way partial symmetric tensor $\tensorT \in \real{N\times M\times N \times M}$ with its exact independent partial symmetric Tucker decomposition $\tensorT = \llbracket \tensorG^{ps}; \bA, \bB, \bA,  \bB\rrbracket$, construct
$$
\hat \tensorG^{ps} = \times_1 (\bA^{\mathrm T}\bA)^{1/2} \times_2 (\bB^{\mathrm T}\bB)^{1/2} \times_3 (\bA^{\mathrm T}\bA)^{1/2} \times_4 (\bB^{\mathrm T}\bB)^{1/2} .
$$
Then any M-eigenvalues of $\hat \tensorG^{ps}$ are also M-eigenvalues of $\tensorT$ while any non-zero M-eigenvalues of $\tensorT$ are also M-eigenvalues of $\hat \tensorG^{ps}$.
\end{theorem}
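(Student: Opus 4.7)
The argument will parallel that of Theorem~\ref{thm:TuckerZEigenConst}, with an additional bookkeeping layer to track the two factor matrices $\bA,\bB$ and the coupled pair of M-eigenvalue equations. First I would introduce the orthonormalized factors $\bU_A = \bA(\bA^{\mathrm T}\bA)^{-1/2}$ and $\bU_B = \bB(\bB^{\mathrm T}\bB)^{-1/2}$, which satisfy $\bU_A^{\mathrm T}\bU_A = \bI$ and $\bU_B^{\mathrm T}\bU_B = \bI$, so that
\[
\tensorT = \hat\tensorG^{ps}\times_1 \bU_A \times_2 \bU_B \times_3 \bU_A \times_4 \bU_B,
\]
and, in the reverse direction (via Lemma~\ref{lemma:IndependentTuckerDecompositionSingle}),
\[
\hat\tensorG^{ps} = \tensorT \times_1 \bU_A^{\mathrm T} \times_2 \bU_B^{\mathrm T} \times_3 \bU_A^{\mathrm T} \times_4 \bU_B^{\mathrm T}.
\]
An index-permutation check identical in spirit to the one used in the proof of Theorem~\ref{thm:SymmetricTuckerConst} then shows that $\hat\tensorG^{ps}$ inherits the partial-symmetric structure of $\tensorT$, so that its M-eigenproblem is well-defined.

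For the forward direction, given M-eigenvectors $(\ba,\bb)$ of $\hat\tensorG^{ps}$ with associated M-eigenvalues $(\lambda,\mu)$, I would set $\bx = \bU_A\ba$ and $\by = \bU_B\bb$. The normalizations $\bx^{\mathrm T}\bx = \ba^{\mathrm T}\ba = 1$ and $\by^{\mathrm T}\by = \bb^{\mathrm T}\bb = 1$ follow at once from the orthonormality of the columns of $\bU_A,\bU_B$, while a direct mode-product calculation yields
\[
\tensorT(\cdot,\by,\bx,\by) = \bU_A\,\hat\tensorG^{ps}(\cdot,\bU_B^{\mathrm T}\by,\bU_A^{\mathrm T}\bx,\bU_B^{\mathrm T}\by) = \bU_A\,\hat\tensorG^{ps}(\cdot,\bb,\ba,\bb) = \lambda\bU_A\ba = \lambda\bx,
\]
and analogously $\tensorT(\bx,\by,\bx,\cdot) = \mu\by$. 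Hence $(\lambda,\mu)$ is an M-eigenvalue pair of $\tensorT$.

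For the reverse direction, assume $(\lambda,\mu)$ is a \emph{nonzero} M-eigenvalue pair of $\tensorT$ with M-eigenvectors $(\bx,\by)$, and set $\ba = \bU_A^{\mathrm T}\bx$ and $\bb = \bU_B^{\mathrm T}\by$. Running the above calculation in reverse and pre-multiplying by $\bU_A^{\mathrm T}$ (resp.\ $\bU_B^{\mathrm T}$) gives $\hat\tensorG^{ps}(\cdot,\bb,\ba,\bb) = \lambda\ba$ and $\hat\tensorG^{ps}(\ba,\bb,\ba,\cdot) = \mu\bb$. The delicate point --- and the main obstacle --- is the normalization $\ba^{\mathrm T}\ba = \bb^{\mathrm T}\bb = 1$, which is \emph{not} automatic because $\bU_A\bU_A^{\mathrm T}$ is only an orthogonal projector rather than the identity. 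To close this gap, I would contract the two eigenvalue equations above with $\ba$ and $\bb$ respectively to obtain
\[
\lambda\ba^{\mathrm T}\ba = \hat\tensorG^{ps}(\ba,\bb,\ba,\bb) = \mu\bb^{\mathrm T}\bb,
\]
and then identify $\hat\tensorG^{ps}(\ba,\bb,\ba,\bb)$ with $\tensorT(\bx,\by,\bx,\by)$ by using the cancellation $\bU_A^{\mathrm T}\bA = (\bA^{\mathrm T}\bA)^{1/2}$ (and its $\bB$-counterpart) in the defining relation for $\hat\tensorG^{ps}$. Finally, contracting the original M-eigenvalue equations with $\bx$ and $\by$ forces $\lambda = \mu = \tensorT(\bx,\by,\bx,\by)$, so that $\lambda\ba^{\mathrm T}\ba = \lambda$; since $\lambda\neq 0$, we conclude $\ba^{\mathrm T}\ba = 1$ and, symmetrically, $\bb^{\mathrm T}\bb = 1$. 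This yields the desired M-eigenpair of $\hat\tensorG^{ps}$.
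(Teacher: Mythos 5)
Your proof is correct and follows essentially the same route as the paper, which itself omits the details and simply points to the argument for Theorem~\ref{thm:TuckerZEigenConst}: orthonormalize the factors, push eigenvectors forward and backward through the decomposition, and recover the normalization in the reverse direction by contracting the eigenvalue equations and using $\lambda=\mu=\tensorT(\bx,\by,\bx,\by)\neq 0$. The projector identity $\tensorT\times_3(\bx^{\mathrm T}\bU_A\bU_A^{\mathrm T})=\tensorT\times_3\bx^{\mathrm T}$ (and its mode-$2$ and mode-$4$ analogues), which you use implicitly when ``running the calculation in reverse,'' is exactly the content of Lemma~\ref{lemma:IndependentTuckerDecompositionSingle} and is worth stating explicitly.
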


\noindent {\bf Proof:}
Similar to the proof of Theorem~\ref{thm:SymmetricTuckerConst}, one can show that $\tensorG^{ps} $ and $\hat \tensorG^{ps}$ are both partial symmetric.
Therefore, the M-eigenvalues of $\hat \tensorG^{ps}$ are well-defined.
The rest of the proof is similar to that of Theorem \ref{thm:TuckerZEigenConst}, and is omitted here for brevity.
\hfill $\square$

Similar to the symmetric case, if the partial symmetric Tucker decomposition is orthonormal then $\hat \tensorG^{ps} = \tensorG^{ps}$, and the equivalence of the M-eigenvalues (except for 0) between a partial symmetric tensor and its Tucker core can be established. The following proposition demonstrates when 0 is always an eigenvalue of the original tensor.

\begin{proposition}\label{prop:zero-M-eig}
Suppose a given four-way partial symmetric tensor $\tensorT\in\real{I_1\times I_2\times I_1 \times I_2}$ has an exact independent partial symmetric Tucker decomposition $\tensorT = \llbracket \tensorG^{ps}; \bA, \bB, \bA,  \bB\rrbracket$ such that $\tensorG^{ps}\in\real{J_1\times J_2\times J_1 \times J_2}$.
Either $J_1 < I_1$ or $J_2 < I_2$ implies the existence of a zero M-eigenvalue.
\end{proposition}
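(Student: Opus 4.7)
The plan is to imitate Proposition~\ref{prop:zero-Z-eig}: exhibit unit vectors $\bx_0,\by_0$ explicitly so that both defining M-eigenvalue equations collapse to $0=0\cdot\bx_0$ and $0=0\cdot\by_0$. The mechanism is the same as before, namely the fact that a ``tall'' factor matrix has a non-trivial left null space, so one can kill the relevant contractions inside the Tucker formula.

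First I would expand the two multilinear forms through the given partial symmetric Tucker decomposition $\tensorT=\llbracket\tensorG^{ps};\bA,\bB,\bA,\bB\rrbracket$. Using the mode-product bookkeeping already recorded in Proposition~\ref{Prp:NModeProd}, a direct substitution yields
$$
\tensorT(\cdot,\by,\bx,\by) \;=\; \bA\,\bigl(\tensorG^{ps}\times_2(\bB^{\mathrm T}\by)^{\mathrm T}\times_3(\bA^{\mathrm T}\bx)^{\mathrm T}\times_4(\bB^{\mathrm T}\by)^{\mathrm T}\bigr),
$$
$$
\tensorT(\bx,\by,\bx,\cdot) \;=\; \bB\,\bigl(\tensorG^{ps}\times_1(\bA^{\mathrm T}\bx)^{\mathrm T}\times_2(\bB^{\mathrm T}\by)^{\mathrm T}\times_3(\bA^{\mathrm T}\bx)^{\mathrm T}\bigr).
$$
The crucial observation is that $\bA^{\mathrm T}\bx$ appears as a contracted factor in \emph{both} expressions, and the same is true for $\bB^{\mathrm T}\by$; so killing either of these two vectors is enough to annihilate both right-hand sides.

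If $J_1<I_1$, the matrix $\bA\in\real{I_1\times J_1}$ has more rows than columns, so $\ker(\bA^{\mathrm T})\neq\{\bzero\}$. Pick any unit $\bx_0\in\ker(\bA^{\mathrm T})$ and any unit $\by_0\in\real{I_2}$; by the displays above, $\bA^{\mathrm T}\bx_0=\bzero$ forces
$$
\tensorT(\cdot,\by_0,\bx_0,\by_0)=\bzero=0\cdot\bx_0,\qquad \tensorT(\bx_0,\by_0,\bx_0,\cdot)=\bzero=0\cdot\by_0,
$$
so $(\bx_0,\by_0)$ is a pair of M-eigenvectors with M-eigenvalues $\lambda=\mu=0$. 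The case $J_2<I_2$ is symmetric: take $\by_0\in\ker(\bB^{\mathrm T})$ of unit norm and any unit $\bx_0$, and the factor $\bB^{\mathrm T}\by_0=\bzero$ kills both right-hand sides.

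The only mildly subtle point — and where I expect a careful reader to pause — is verifying that $\bB^{\mathrm T}\by$ genuinely survives as a contracted factor inside $\tensorT(\bx,\by,\bx,\cdot)$ even though the fourth slot is left open; it does, because the \emph{second} slot is still contracted against $\by$, and analogously $\bA^{\mathrm T}\bx$ survives inside $\tensorT(\cdot,\by,\bx,\by)$ through the third slot. Once this bookkeeping is in place, the proof is essentially a one-line consequence of partial symmetry together with the rank-deficiency of $\bA$ or $\bB$.
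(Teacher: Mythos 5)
Your proof is correct and follows exactly the route the paper intends: the paper omits the argument, stating it is ``almost identical'' to Proposition~\ref{prop:zero-Z-eig}, and your use of a unit vector in $\ker(\bA^{\mathrm T})$ (resp.\ $\ker(\bB^{\mathrm T})$) to annihilate both contractions $\tensorT(\cdot,\by,\bx,\by)$ and $\tensorT(\bx,\by,\bx,\cdot)$ is precisely that null-space argument carried over to the partial symmetric setting. The bookkeeping you flag — that $\bA^{\mathrm T}\bx$ and $\bB^{\mathrm T}\by$ each appear as contracted factors in both multilinear forms — is verified correctly, so no gap remains.
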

The proof is almost identical to that of Proposition~\ref{prop:zero-Z-eig}, and is omitted here.





\section{Miscellaneous discussions and error estimations}
\label{sec:error}

The invariance properties that we have established immediately lead to possible enhancements of many existing bounds. For instance, it is well known that for
$\tensorX \in \complN N$ with $I_1 \leq I_2 \leq \cdots \leq I_N$, it holds that $\rank_{CP}(\tensorX) \leq I_1  I_2 \cdots I_{N-1} $.
Recently, Hu~\cite{ShenglongHu} showed that
the tensor nuclear norm $\| \tensorX \|_1$ is upper bounded by $I_1  I_2 \cdots I_{N-1} \cdot \| \bX_{(N)} \|_*$, where $\| \cdot \|_*$ denotes the nuclear norm of a matrix, and the bound is tight when $N = 3$. Now, all these bounds can be sharpened by means of the Tucker rank. 

Suppose $\tensorX \in \complN N$ is an $N$-way tensor with $\rank_T(\tensorX)=(R_1, \ldots, R_N)$. Without loss of generality, assume that $R_1 \leq R_2 \leq \cdots \leq R_N$. Then
\begin{itemize}

\item
$\rank_{CP}(\tensorX) \leq R_1  R_2 \cdots R_{N-1};$

\item
$\|\tensorX\|_1 \leq R_1  R_2 \cdots R_{N-1}\cdot \|\bX_{(N)}\|_*$.


\end{itemize}


As discussed earlier, tensor related computations such as the CP decompositions, norms or eigenvalues are mostly NP hard~\cite{HillarLim2013}. Moreover, exact solution methods, such as the SOS (Sum of Squares) approach to tensor eigenvalue computation (see~\cite{AREoST}), are often very sensitive to the size of the underlying tensor. At the same time, the Tucker decomposition involves only matrix operations, hence easy computable. Therefore it is natural to consider a reduction scheme where the tensor computation is only carried out for its core. Before proceeding, we set out to explore if size of the Tucker core of a tensor is indeed typically smaller than the size of the tensor itself.
To this end, we find it compelling to test the size reduction on some well studied specific instances of tensors. Below is a summary of our experimental results. 


\begin{itemize}

\item

{\it (A tensor case studied in \cite{TGaA}.) }


This specific tensor is in $\compl{3\times 3\times 3}$, corresponding to the following polynomial
\begin{equation*}
\begin{split}
\tensorT(\bx^4)&=81x_0^4+17x_1^4+626x_2^4-144x_0x_1^2x_2+216x_0^3x_1-108x_0^3x_2+216x_0^2x_1^2+54x_0^2x_2^2+\\
&96x_0x_1^3-12x_0x_2^3-52x_1^3x_2+174x_1^2x_2^2-508x_1x_2^3+72x_0x_1x_2^2-216x_0^2x_1x_2.
\end{split}
\end{equation*}
The dimension of the tensor is $3$ while the
size of its core tensor is $2$.

\item

{\it (A tensor case studied in \cite{Nie2015}.) }


This specific tensor is in $\compl{5\times 5\times 5}$, with its components given by
$$
\tensorT_{i_1i_2i_3}=i_1i_2i_3-i_1-i_2-i_3 \ (0\leq i_1, i_2, i_3 \leq 4).
$$
The dimension of the tensor is $5$ while the size of its core tensor is $2$.

\item

{\it (Another tensor case studied in \cite{Nie2015}.) }

This specific tensor is in $\compl{5\times 5\times 5\times 5}$, with its components given by
$$
\tensorT_{i_1i_2i_3i_4}=\tan(i_1i_2i_3i_4) \ (0\leq i_1, i_2, i_3, i_4\leq 4).
$$
The dimension of the tensor is $5$ while the size of the core tensor is $4$.

\end{itemize}

For some special classes of tensors, it might be possible to estimate the size of its Tucker core.

\begin{proposition}
Consider $N$-th order tensor $\mathcal{X}$ with a separate structure:
\begin{equation}\label{seperate-structure}
\mathcal{X}_{i_1\cdots i_N} = \sum_{n=1}^{N}f_n(i_n).
\end{equation}
Then the size of its Tucker core is no more than $(2,\cdots, 2)$.
Consider the following $N$-th order symmetric real tensor $\tensorT$ (see~\cite{NieWang2014} or Example 4.12 in~\cite{AREoST}):

$$
\tensorT_{i_1\cdots,i_N} =
 {\sin(i_1 + \cdots + i_N)}.
$$

The size of its Tucker core is no more than $2$.
\end{proposition}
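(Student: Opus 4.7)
The plan is to handle both statements uniformly by showing that, for every mode $n$, the mode-$n$ matricization has rank at most $2$. Since Definition~\ref{Def:TuckerRank} identifies the Tucker rank with the vector of mode-$n$ matricization ranks, and the explicit construction of an independent Tucker decomposition sketched after Lemma~\ref{lemma:IndependentTuckerDecompositionSingle} (matricize, extract a full-column-rank factor $\bA^{(n)}$, and form $\tensorG = \tensorX \times_1 (\bA^{(1)})^H \times \cdots \times_N (\bA^{(N)})^H$) produces a core whose $n$-th dimension is exactly $\rank(\bX_{(n)})$, establishing the rank bound on each matricization immediately gives the claimed bound on the size of the Tucker core.

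For the separate-structure tensor in \eqref{seperate-structure}, I would fix a mode $n$ and inspect the $(i_n, j)$-entry of $\bX_{(n)}$, where $j$ encodes the remaining multi-index $(i_1, \ldots, i_{n-1}, i_{n+1}, \ldots, i_N)$. By construction it equals $f_n(i_n) + \sum_{k\neq n} f_k(i_k)$, which depends on $i_n$ only through $f_n(i_n)$ and on $j$ only through $g_n(j) := \sum_{k\neq n} f_k(i_k)$. Letting $\bff_n$ and $\bg_n$ denote the corresponding column vectors and writing $\mathbf{1}$ for an all-ones vector of the appropriate size, this yields the explicit factorization $\bX_{(n)} = \bff_n \mathbf{1}^T + \mathbf{1}\, \bg_n^T$, a sum of two rank-one matrices. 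Hence $\rank(\bX_{(n)}) \leq 2$ for every $n$, and the Tucker core has size at most $(2,\ldots,2)$.

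For the sine tensor, the key observation is the angle-addition identity
$$\sin(i_1+\cdots+i_N) \;=\; \sin(i_n)\,\cos\Bigl(\sum_{k\neq n} i_k\Bigr) \;+\; \cos(i_n)\,\sin\Bigl(\sum_{k\neq n} i_k\Bigr),$$
which shows that the $(i_n,j)$-entry of the mode-$n$ matricization $\bT_{(n)}$ is a linear combination of the two fixed $I$-vectors $\bs = (\sin i_n)_{i_n}$ and $\bc = (\cos i_n)_{i_n}$, with coefficients depending only on $j$. Thus every column of $\bT_{(n)}$ lies in $\mathrm{span}\{\bs,\bc\}$, so $\rank(\bT_{(n)}) \leq 2$; by symmetry this holds for every mode, giving a Tucker core of size no more than $2$ in each direction.

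No substantive obstacle is anticipated: both cases reduce to displaying a generic column of $\bX_{(n)}$ (respectively $\bT_{(n)}$) as a linear combination of two fixed vectors. The only point to be careful about is connecting the mode-$n$ rank bound to the size of the core, but this is handled by invoking the construction of the independent Tucker decomposition already set up in the paper, so no additional machinery is needed.
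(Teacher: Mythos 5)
Your proposal is correct and follows essentially the same route as the paper: both arguments bound the rank of each mode-$n$ matricization by $2$ by exhibiting it as a sum of two rank-one matrices (via the separability of the entries in the first case and the sine angle-addition identity in the second), and then identify these ranks with the dimensions of the Tucker core. Your explicit remark tying the matricization ranks to the core size through the construction following Lemma~\ref{lemma:IndependentTuckerDecompositionSingle} is a point the paper leaves implicit, but it is not a different method.
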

\begin{proof} For tensor $\mathcal{X}$, it suffices to show that the rank of any mode-$n$ matricization $\bX_{(n)}$ is no more than  $2$.
Note that $\bX_{(n)}$ can be specified componentwise by $(\bX_{(n)})_{i_n, j} = f_n(i_n)+ \sum_{k \neq n}f_k(i_k)$ where
$j=1+\sum_{k=1, k\neq n}^N{(i_k-1)J_k}$ with $\ J_k=\prod_{m=1, m\neq n}^{k-1}{I_m}$. By constructing vectors $\ba = [f_n(i_n) ]$ and $\bb =  [\sum_{k \neq n}f_k(i_k) ]$, it follows that
$\bX_{(n)} = \ba \be^{\mathrm T} + \be \bb^{\mathrm T}$. Consequently, $\rank (\bX_{(n)}) \le 2$ and the first half of the conclusion is proved.

To study tensor $\tensorT$, due to the symmetric property, without loss of generality it suffices to consider the mode-$1$ matricization $\bT_{(1)}$ such that

\begin{eqnarray*}
(\bT_{(1)})_{i_1, j} &=& {\sin(i_1+\cdots +i_N)} \\
&=& \sin(i_1) {\cos(i_2 + \cdots + i_N)} + \cos(i_1) {\sin(i_2 +\cdots + i_N)}\; \mbox{with}\; j=1+\sum_{k=2}^N(i_k-1)n^{k-1},
\end{eqnarray*}

where $n$ is the length of the tensor along each direction. Simply letting
$$\ba = [\sin(i_1) ],\; \bb =  [{\cos(i_2+\cdots+ i_N) }],\; \bc = [\cos(i_1) ]\; \mbox{and}\; \bd = [{\sin(i_2+\cdots+ i_N)} ]$$
yields that
$$
\bT_{(1)} = \ba \bb^{\mathrm T} + \bc \bd^{\mathrm T},
$$
proving the second half of the proposition.
\end{proof}

As observed above, Tucker core of size no more than $2$ is not uncommon.
Actually, Examples 2-5 provided in the next section all belong to this category.
We shall remark here that there are specific techniques available to
solve tensor problems {of size $2$}. For example, computing the Z-eigenvalues of a size $2$ tensor is equivalent to finding the common roots of two bivariate polynomials, and a numerical procedure  for solving the latter problem was discussed in~\cite{SorberVanBarelDeLathauwe2014}.




Now let us turn to the issue of estimating the error projected on the original tensor while working with a Tucker core approximately. Obviously, errors may occur in the process of Tucker decomposition; so the core tensor that we deal with may not be the true core tensor. The question is: Will the errors expand very quickly? We shall discuss the case for the CP decomposition here. The answer is negative.

\begin{proposition}
For a given tensor $\tensorT\in\complN N$, its independent (but not necessarily exact) Tucker decomposition
$
\tensorT = 
\llbracket \tensorG; \bA^{(1)}, \ldots, \bA^{(N)} \rrbracket
$ has the error
$$
\mbox{Err}_1=\|\tensorT-\tensorG\times_1\bA^{(1)}\dots \times_N\bA^{(N)} \|_F.
$$
Now we perform a CP decomposition on $\tensorG$ and get
$$
\widetilde{\tensorG} = \sum_{t=1} ^r \ba^{(1,t)} \circ \cdots \circ \ba^{(N,t)}
$$
with $\mbox{Err}_2=\|\tensorG-\widetilde{\tensorG} \|_F$.
Then
$$
\widetilde{\tensorT} = \widetilde{\tensorG}\times_1 \bA^{(1)}\times_2 \dots \times_N \bA^{(N)}
$$
is a CP decomposition of $\tensorT$ with an error estimation
$$
\|\tensorT- \widetilde{\tensorT} \|_F\leq \mbox{Err}_1+\mbox{Err}_2 \prod_{n=1}^N{\|\bA^{(n)}\|_2}.
$$
\end{proposition}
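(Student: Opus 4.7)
The plan is to combine the triangle inequality with Lemma~\ref{Lem:FrobeniusMatrixProduct}. First I would introduce the intermediate tensor $\tensorG\times_1 \bA^{(1)}\times_2\cdots\times_N\bA^{(N)}$, which differs from $\tensorT$ by exactly $\mbox{Err}_1$ in Frobenius norm by definition, and differs from $\widetilde{\tensorT}$ by the contribution stemming from the CP approximation error on the core. Applying the triangle inequality yields
\[
\|\tensorT-\widetilde{\tensorT}\|_F \;\le\; \|\tensorT - \tensorG\times_1\bA^{(1)}\times_2\cdots\times_N\bA^{(N)}\|_F + \|\tensorG\times_1\bA^{(1)}\times_2\cdots\times_N\bA^{(N)} - \widetilde{\tensorT}\|_F.
\]
The first summand is precisely $\mbox{Err}_1$, so only the second summand requires work.

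Next I would rewrite the second summand using the multilinearity of the mode-$n$ product: since $\widetilde{\tensorT}=\widetilde{\tensorG}\times_1\bA^{(1)}\times_2\cdots\times_N\bA^{(N)}$, we can factor out the mode products to obtain
\[
\tensorG\times_1\bA^{(1)}\times_2\cdots\times_N\bA^{(N)} - \widetilde{\tensorT} \;=\; (\tensorG-\widetilde{\tensorG})\times_1\bA^{(1)}\times_2\cdots\times_N\bA^{(N)}.
\]
Now Lemma~\ref{Lem:FrobeniusMatrixProduct} applies one mode at a time: applying it to mode $1$ gives a factor of $\|\bA^{(1)}\|_2$, applying it to mode $2$ on the resulting tensor gives $\|\bA^{(2)}\|_2$, and so on. Iterating $N$ times (which is valid because the mode products in distinct modes commute by Proposition~\ref{Prp:NModeProd}) produces
\[
\|(\tensorG-\widetilde{\tensorG})\times_1\bA^{(1)}\times_2\cdots\times_N\bA^{(N)}\|_F \;\le\; \|\tensorG-\widetilde{\tensorG}\|_F \prod_{n=1}^N \|\bA^{(n)}\|_2 \;=\; \mbox{Err}_2\prod_{n=1}^N\|\bA^{(n)}\|_2.
\]

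Combining this bound with the triangle inequality step gives the desired estimate. The proof is essentially routine; there is no real obstacle beyond correctly invoking Lemma~\ref{Lem:FrobeniusMatrixProduct} iteratively and observing that $\widetilde{\tensorT}$ is a bona fide CP decomposition of $\tensorT$ (with rank at most $r$), which follows from Lemma~\ref{Lem:TensorProdMatrProd} applied to $\widetilde{\tensorG}=\sum_{t=1}^r \ba^{(1,t)}\circ\cdots\circ\ba^{(N,t)}$ exactly as in the proof of Theorem~\ref{Thm:CPRankInvariance}. The take-away is that the total error grows only additively in $\mbox{Err}_1$ and linearly in $\mbox{Err}_2$, with amplification controlled by the product of spectral norms of the factor matrices — so if the factor matrices are well-conditioned (in particular, orthonormal, in which case the amplification constant equals $1$), the scheme is numerically stable.
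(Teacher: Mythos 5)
Your proposal is correct and follows essentially the same route as the paper's proof: the triangle inequality through the intermediate tensor $\tensorG\times_1\bA^{(1)}\times_2\cdots\times_N\bA^{(N)}$, multilinearity to factor out $(\tensorG-\widetilde{\tensorG})$, iterated application of Lemma~\ref{Lem:FrobeniusMatrixProduct}, and Lemma~\ref{Lem:TensorProdMatrProd} to confirm that $\widetilde{\tensorT}$ is a genuine CP decomposition. No gaps.
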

\noindent {\bf Proof:}
By Lemma~\ref{Lem:TensorProdMatrProd},
\begin{eqnarray*}
\widetilde{\tensorG}\times_1 \bA^{(1)}\times_2 \dots \times_N \bA^{(N)}&=& \sum_{t=1} ^r \ba^{(1,t)} \circ \cdots \circ \ba^{(N,t)}\times_1 \bA^{(1)}\times_2 \dots \times_N \bA^{(N)}\\
 &=& \sum_{t=1} ^r \bA^{(1)}\ba^{(1,t)} \circ \cdots \circ \bA^{(N)}\ba^{(N,t)},
\end{eqnarray*}
which is indeed a CP decomposition of $\tensorT$. Moreover, the error of this decomposition is:
\begin{eqnarray*}
&&\|\tensorT-\widetilde{\tensorT}\|_F = \|\tensorT-\widetilde{\tensorG}\times_1 \bA^{(1)}\times_2 \bA^{(2)}\dots \times_N \bA^{(N)}\|_F\\
&=&\|\tensorT-\tensorG\times_1 \bA^{(1)}\times_2 \bA^{(2)}\dots \times_N \bA^{(N)}+(\tensorG-\widetilde{\tensorG})\times_1 \bA^{(1)}\times_2 \bA^{(2)}\dots \times_N \bA^{(N)}\|_F\\
&\leq&\|\tensorT-\tensorG\times_1 \bA^{(1)}\times_2 \bA^{(2)}\dots \times_N \bA^{(N)}\|_F+\|(\tensorG-\widetilde{\tensorG})\times_1 \bA^{(1)}\times_2 \bA^{(2)}\dots \times_N \bA^{(N)}\|_F\\
&\le&\mbox{Err}_1+\mbox{Err}_2 \prod_{n=1}^N{\|\bA^{(n)}\|_2},
\end{eqnarray*}
where the last inequality is due to Lemma \ref{Lem:FrobeniusMatrixProduct}.
\hfill $\square$

We remark that when the Tucker decomposition is exact and orthonormal, then the above proposition reduces to Lemma 2.5 in~\cite{KhoromskijKhoromskaia2007}.
This proposition also suggests that one may indeed choose to work with a smaller Tucker core, and the resulting approximative CP decomposition will have a controllable error bound, thanks to an additive rate of the error accumulations.

\section{ Numerical experiments }


The goal of this section is to experiment if a reduced Tucker core helps to solve the tensor problem overall. The answer is, interestingly: {\it it depends}. If we apply the standard Alternating Least Squares (ALS) approach to find the CP decomposition, then \cite{TomasiBro2006} reported that more ALS iterations may be required on a  compressed core tensor. This is perhaps not very surprising, because in some cases the computational complexity does not necessarily go down with the size per se. Since we are not aware of a standard solver to compute the CP decomposition exactly to do the comparison, we choose to experiment with an `easier' computational object: the Z-eigenvalues and Z-eigenvectors of a symmetric tensor.

Most papers focussed on the computation of the largest or the smallest Z-eigenvalue; see~\cite{QiWangWang2009,NieWang2014,JiangMaZhang2015}. Most recently, Cui, Dai and Nie~\cite{AREoST} proposed an algorithm that can find every Z-eigenvalue of a given tensor.
Their idea is to formulate the problem of computing each Z-eigenvalue as a polynomial optimization problem, and then resort to the SOS method that in principle can globally solve any
polynomial optimization to optimality. In our numerical experiments, we used the method in~\cite{AREoST} to compute all the Z-eigenvalues and Z-eigenvectors for the original tensor and for its Tucker core tensor. In particular, we record both the running time of
Algorithm 3.6 in~\cite{AREoST} applied to some specific instances and the running time of the same algorithm on the corresponding Tucker core tensor plus the time consumed by the Tucker decomposition.
Our code is based on that in~\cite{AREoST} with some slight modifications and parameter tunings.

In the following, we choose five testing examples to do this experiment, and report the corresponding numerical results.
All of our experiments are run using MATLAB R2013a on a MacBook with an Intel dual core CPU at 1.3 GHz $\times 2$ and 4 GB of RAM, under an OS X 10.9.5 operating system.

%

\begin{example}
(Example 2 in \cite{OtZotSLT})

Consider the symmetric tensor $\tensorX\in \real{5\times 5\times 5\times 5}$ such that
$$
\tensorX(\bx^{\circ4})=(x_1+x_2+x_3+x_4)^4+(x_2+x_3+x_4+x_5)^4.
$$
The dimension of the original tensor is $5$ while the dimension of its core is $2$. It took 30.03 seconds to compute directly on the tensor itself while the computation on its core (including the Tucker decomposition) took only 3.29 seconds. The computed eigenvalues and eigenvectors of the original tensor are in Table \ref{OriginalExample1} and that of the core tensor are in Table \ref{CoreExample1}.
\begin{table}[htdp]
\begin{center}
\begin{tabular}{|c|c|c|c|c|c|}
\hline
Eigenvalues&\multicolumn{5}{|c|}{Eigenvectors}\\
\hline
24.500&-0.267&-0.535&-0.535&-0.535&-0.267\\
\hline
0.500&0.707&-0.000&-0.000&-0.000&-0.707\\
\hline
0.000&-0.073&0.618&-0.751&0.211&-0.067\\
\hline
\end{tabular}
\end{center}
\caption{Eigenvalues and Eigenvectors of the Original Tensor in Example 1}
\label{OriginalExample1}
\end{table}%

\begin{table}[htdp]
\begin{center}
\begin{tabular}{|c|c|c|}
\hline
Eigenvalues&\multicolumn{2}{|c|}{Eigenvectors}\\
\hline
24.500&-1.000&0.000\\
\hline
0.500&-0.000&-1.000\\
\hline
0.000&-0.354&-0.935\\
\hline
\end{tabular}
\end{center}
\caption{Eigenvalues and Eigenvectors of the Core Tensor in Example 1}
\label{CoreExample1}
\end{table}%

\end{example}

\begin{example}
(Example 3.5 in \cite{NieWang2014})

Consider the symmetric tensor $\tensorX\in \real{n\times n\times n}$ such that
$$
\tensorX_{ijk}=\frac{(-1)^i}{i}+\frac{(-1)^j}{j}+\frac{(-1)^k}{k} \ (1\leq i, j, k\leq n).
$$
For the case $n=8$, the dimension of the original tensor is $8$ while the dimension of its core is $2$.
The direct computation on the original tensor took 17670.71 seconds, and the computation on its core took 2.66 seconds.
The resulting eigenvalues and eigenvectors of the original tensor and its core tensor are in Table \ref{OriginalExample3} and Table \ref{CoreExample3} respectively.
\begin{table}[htdp]
\begin{center}
\begin{tabular}{|c|c|c|c|c|c|c|c|c|}
\hline
Eigenvalues&\multicolumn{8}{|c|}{Eigenvectors}\\

  \hline
 14.436&-0.687&-0.066&-0.411&-0.169&-0.356&-0.204&-0.332&-0.221\\
 \hline
8.586&-0.225& 0.579& 0.132& 0.445& 0.203& 0.400& 0.234& 0.378\\
\hline
0.000&-0.335& 0.283& 0.252& 0.256&-0.030&-0.751&-0.013& 0.337\\
\hline
 -14.436&-0.687&-0.066&-0.411&-0.169&-0.356&-0.204&-0.332&-0.221\\
 \hline
  -8.586&-0.225& 0.579& 0.132& 0.445& 0.203& 0.400& 0.234& 0.378\\
\hline
\end{tabular}
\end{center}
\caption{Eigenvalues and Eigenvectors of the Original Tensor in Example 2}
\label{OriginalExample3}
\end{table}%

\begin{table}[htdp]
\begin{center}
\begin{tabular}{|c|c|c|}
\hline
Eigenvalues&\multicolumn{2}{|c|}{Eigenvectors}\\
\hline
  14.436&-0.985&-0.175\\
  \hline
8.586& 0.488&-0.873\\
\hline
  -0.000& 0.344& 0.939\\
  \hline
 -14.436&-0.985&-0.175\\
 \hline
  -8.586& 0.488&-0.873\\
\hline
\end{tabular}
\end{center}
\caption{Eigenvalues and Eigenvectors of the Core Tensor in Example 2}
\label{CoreExample3}
\end{table}%

\end{example}

\begin{example}
(Example 4.14 in \cite{AREoST})

Consider the symmetric tensor $\tensorX\in \real{n\times n\times n\times n\times n}$:
$$
\tensorX_{i_1i_2i_3i_4i_5}=\ln(i_1)+\ln(i_2)+\ln(i_3)+\ln(i_4)+\ln(i_5)  \ (1\leq i_1, i_2, i_3, i_4, i_5\leq n).
$$
For the case $n=4$, the dimension of the original tensor is $4$ while the dimension of its core is $2$.
The direct computation on the original tensor took 186.58 seconds, and the computation on its core took 5.23 seconds.
The eigenvalues and eigenvectors of the original tensor and its core tensor are in Table \ref{OriginalExample6} and Table \ref{CoreExample6} respectively.

\begin{table}[htdp]
\begin{center}
\begin{tabular}{|c|c|c|c|c|}
\hline
Eigenvalues&\multicolumn{4}{|c|}{Eigenvectors}\\
\hline
 132.307&  0.403& 0.484& 0.532& 0.566\\
 \hline
0.707& -0.905&-0.308& 0.041& 0.289\\
 \hline
0.001&  0.565& 0.254&-0.022&-0.785\\
 \hline
-132.307&  0.403& 0.484& 0.532& 0.566\\
 \hline
  -0.707& -0.905&-0.308& 0.041& 0.289\\
\hline
\end{tabular}
\end{center}
\caption{Eigenvalues and Eigenvectors of the Original Tensor in Example 3}
\label{OriginalExample6}
\end{table}%

\begin{table}[htdp]
\begin{center}
\begin{tabular}{|c|c|c|}
\hline
Eigenvalues&\multicolumn{2}{|c|}{Eigenvectors}\\
\hline
 132.307&  1.000&-0.000\\
 \hline
0.707& -0.329&-0.944\\
 \hline
  -0.000&  0.127&-0.992\\
 \hline
-132.307&  1.000&-0.000\\
 \hline
  -0.707& -0.329&-0.944\\
\hline
\end{tabular}
\end{center}
\caption{Eigenvalues and Eigenvectors of the Core Tensor in Example 3}
\label{CoreExample6}
\end{table}%

\end{example}

{
\begin{example}
(Example 4.12 in \cite{AREoST})
Consider the symmetric tensor $\tensorX\in \real{n\times n\times n\times n}$:
$$
\tensorX_{i_1i_2i_3i_4}=\sin(i_1+i_2+i_3+i_4)\ (1\leq i_1, i_2, i_3, i_4\leq n).
$$
For the case $n=4$, the dimension of the original tensor is $4$ while the dimension of its core is $2$.
The direct computation on the original tensor took 74.75 seconds, and the computation on its core took 5.48 seconds.
The eigenvalues and eigenvectors of the original tensor and its core tensor are in Table \ref{OriginalExample4} and Table \ref{CoreExample4} respectively.

\begin{table}[htdp]
\begin{center}
\begin{tabular}{|c|c|c|c|c|}
\hline
Eigenvalues&\multicolumn{4}{|c|}{Eigenvectors}\\
\hline
4.632&  0.500&-0.133&-0.644&-0.563\\
\hline
2.991& -0.347&-0.766&-0.482& 0.246\\
\hline
0.000&  0.623&-0.587& 0.512& 0.068\\
\hline
  -5.645& -0.629&-0.485& 0.105& 0.598\\
\hline
  -2.525&  0.083&-0.621&-0.755&-0.194\\
\hline
\end{tabular}
\end{center}
\caption{Eigenvalues and Eigenvectors of the Original Tensor in Example 4}
\label{OriginalExample4}
\end{table}%

\begin{table}[htdp]
\begin{center}
\begin{tabular}{|c|c|c|}
\hline
Eigenvalues&\multicolumn{2}{|c|}{Eigenvectors}\\
\hline
  4.632&  -0.821& 0.571\\
\hline
  2.991&  -0.485&-0.874\\
\hline
  -5.645&  0.195&-0.981\\
\hline
  -2.525&  -0.969&-0.247\\
\hline
\end{tabular}
\end{center}
\caption{Eigenvalues and Eigenvectors of the Core Tensor in Example 4}
\label{CoreExample4}
\end{table}%

\end{example}
}

To conclude, the computational time spent on finding the Z-eigenvalues and Z-eigenvectors of a tensor can be substantially reduced if we turn to its Tucker core tensor instead. Table \ref{Tab: The Time Comparison between Algorithm and Revised Algorithm} summarizes the recorded computational times for the above examples 1 -- 5.

{
{\bf Acknowledgements.} We would like to thank Chunfeng Cui for sharing with us the codes on computing all Z-eigenvalues, and we thank Shmuel Friedland, Lek-Heng Lim, Jiawang Nie and Nikos Sidiropoulos for the fruitful discussions on the topics related to this paper.}

\begin{table}[htdp]

\begin{center}
\begin{tabular}{||c||c||c|c||c|c||}
\hline
{\it Example} & {\it Tensor Order } & {\it Tensor Size} & {\it Core Size} & {\it CPU for Tensor} & {\it CPU for Core} \\ \hline  \hline
1 & 4 & 5 & 2 & 30.03s & 3.29s \\
\hline
2&3&8&2&17670.70s&2.66s\\
\hline
3&5&4&2&186.58s&5.23s\\
\hline
{4}&{4}&{4}&{2}&{74.75s}&{5.48s}\\
\hline
\end{tabular}
\end{center}
\caption{Computational Time Comparison }
\label{Tab: The Time Comparison between Algorithm and Revised Algorithm}
\end{table}%
\bigskip

\bibliographystyle{unsrt}

\end{document}